\newcommand{\qed}{\hfill $\square$\\
\vspace{0.1cm}}
\newtheorem{theorem}{Theorem}[section]
\newtheorem{definition}[theorem]{Definition}
\newtheorem{lemma}[theorem]{Lemma}
\newtheorem{fact}[theorem]{Fact}
\newtheorem{proposition}[theorem]{Proposition}
\newtheorem{conjecture}{Conjecture}[section]
\newtheorem{question}[theorem]{Question}
\newenvironment{proof}{\noindent{\em Proof.}}{\qed}
\newcommand{\B}{\mathcal{B}}
\newcommand{\C}{\mathcal{C}}
\newcommand{\F}{\mathcal{F}}
\newcommand{\G}{\mathcal{G}}
\begin{document}

\title{The Boolean Rainbow Ramsey Number of Antichains, Boolean Posets, and Chains}

\author{
Hong-Bin Chen
\thanks{Department of Applied Mathematics, National Chung Hsing University, Taichung 40227, Taiwan
{\tt Email:andanchen@gmail.com} supported by MOST 107-2115-M-035 -003 -MY2.}
\and
Yen-Jen Cheng
\thanks{Department of Mathematics, National Taiwan Normal University, Taipei 116, Taiwan
{\tt Email:yjc7755@gmail.com}}
\and
Wei-Tian Li
\thanks{Department of Applied Mathematics, National Chung Hsing University, Taichung 40227, Taiwan
{\tt Email:weitianli@nchu.edu.tw} supported by MOST-107-2115-M-005 -002 -MY2.}
\and
Chia-An Liu
\thanks{Department of Mathematics, Xiamen University Malaysia, Selangor 43900, Malaysia.
{\tt Email:chiaan.liu@xmu.edu.my}}
}

\date{\small \today}

\maketitle

\begin{abstract}
Motivated by the paper of Axenovich and Walzer~\cite{AW},
we study the Ramsey-type problems on the Boolean lattices.
Given posets $P$ and $Q$, we look for the smallest Boolean lattice $\B_N$ such that
any coloring on elements of $\B_N$ must contain a monochromatic $P$ or a rainbow $Q$.
This number $N$ is called the Boolean rainbow Ramsey number of $P$ and $Q$ in the paper.

Particularly, we determine the exact values of the Boolean rainbow Ramsey number for $P$ and $Q$ being the antichains, the Boolean posets, or the chains.
From these results, we also give some general upper and lower bounds of
the Boolean rainbow Ramsey number for general $P$ and $Q$ in terms of the poset parameters.
\end{abstract}

{\noindent} Keywords: Boolean lattices, posets, Ramsey theory, rainbow colorings.

\section{Introduction}

A {\em poset} $P=(P,\le_P)$ is a set $P$ equipped with a partial order $\le_P$.
In this paper, we study the Ramsey-type problems of the well-known poset, {\em Boolean lattices},
$\mathcal{B}_n$ whose underlying set is the collection of all subsets of $[n]:=\{1,2,\ldots, n\}$ and the partial order is the inclusion relation on sets.
The {\em $k$-th level} of $\B_n$ is the collection of all $k$-subsets of $[n]$, denoted as $\binom{[n]}{k}$.
A family $\F$ of subsets is isomorphic to a poset $P$ if there exists an order-preserving bijection $\phi$ between $P$ and $\F$, i.e. $P\stackrel{\phi}{\longleftrightarrow}\F$ such that for any $x_1, x_2\in P$, $x_1 <_P x_2$ if and only if
$\phi(x_1 )\subset \phi(x_2)$.
If a family $\F$ contains a subfamily $\G$ isomorphic to $P$, we will say $\F$ contains $P$ as a {\em strong subposet} ($\F$ contains $P$, for short),
or say the subsets in $\G$ form a copy of $P$.
A {\em coloring} ({\em $k$-coloring}) on $\B_n $ is a mapping $c$ from $\B_n $ to a set of positive integers (to $[k]$).
Given a coloring $c$ on $\B_n$, we say $\B_n$ contains a {\em monochromatic} $P$ under $c$ if there is a family of subsets of the same color containing $P$.

In the literature of Ramsey theory, the Ramsey problems have been greatly studied on the set systems (hypergraphs),
the graphs, the planes, and the general posets, for example see~\cite{AGLM,CFS1,CFS2,FMO,KT,T99}.
The following type of Ramsey problems on Boolean lattices has been recently studied by Axenovich and Walzer~\cite{AW}:

\begin{question}
Given posets $P_1,\ldots,P_k$, find the least integer $n$ such that
for any $k$-coloring on $\mathcal{B}_n$, it always contains a monochromatic $P_i$ of color $i$ for some $i$?
\end{question}

In their paper, they called such a number the poset Ramsey number.
Since there are many Ramsey properties investigated on the families of posets with various parameters,
to make it more precise, we suggest the name {\em the Boolean Ramsey number} and use it in this paper.
Let us define the term formally below:

\begin{definition}[The Boolean Ramsey number]
Given posets $P_1,\ldots,P_k$, the Boolean Ramsey number
 $R(P_1,\ldots,P_k)$ is the minimum integer $n$ such that
for any $k$-coloring on $\mathcal{B}_n$, it always contains a monochromatic $P_i$ of color $i$ for some $i$.
Moreover, if $P_i=P$ for all $1\le i\le k$, then we use $R_k(P)$ to denote $R(P_1,\ldots,P_k)$.
\end{definition}

The number $R_k(P)$ is shown to be finite in~\cite{AW}:
\begin{theorem}{\rm[Theorem 6,~\cite{AW}]}\label{AW}
For any poset that is not an antichain, $R_k(P)=\Theta(k)$.
\end{theorem}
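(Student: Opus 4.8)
The plan is to prove matching linear bounds $k\le R_k(P)\le g(P)\cdot k$ for some constant $g(P)$ depending only on $P$.

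\emph{Lower bound.} Because $P$ is not an antichain it has two comparable elements, hence contains a copy of the two-element chain $\B_1$; so any $k$-colouring of $\B_n$ with no monochromatic $\B_1$ also has no monochromatic $P$, giving $R_k(P)\ge R_k(\B_1)$. For $R_k(\B_1)\ge k$: the longest chain of $\B_{k-1}$ has $k$ elements, so by Mirsky's theorem $\B_{k-1}$ is a union of $k$ antichains; colouring each antichain with its own colour produces a $k$-colouring whose colour classes are all antichains, so no two comparable sets share a colour. Hence $R_k(P)\ge k=\Omega(k)$. (Partitioning $\B_n$ instead into parts of height at most $h-1$, where $h\ge 2$ is the height of $P$, strengthens this to $R_k(P)\ge (h-1)k$, but $\Omega(k)$ is all we need.)

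\emph{Upper bound.} I would use the Lubell function $\lambda(\F):=\sum_{F\in\F}\binom{n}{|F|}^{-1}$ of a family $\F\subseteq\B_n$, which equals the expected number of members of $\F$ lying on a uniformly random maximal chain of $\B_n$. The one nontrivial ingredient is the following fact from the theory of forbidden subposets (known, or provable by standard methods): there is a constant $g(P)$, independent of $n$, such that every family $\F\subseteq\B_n$ that does not contain $P$ as a strong subposet satisfies $\lambda(\F)\le g(P)$. Since $P$ embeds as a strong subposet of $\B_{|P|}$ via the principal down-sets $x\mapsto\{\,y\in P:y\le_P x\,\}$, any $P$-free family is $\B_{|P|}$-free, so it even suffices to know this for $P=\B_t$, taking then $g(P):=g(\B_{|P|})$.

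Granting that, the upper bound is one line. For any $k$-colouring of $\B_n$ with colour classes $\F_1,\dots,\F_k$, each maximal chain has exactly $n+1$ members, so $\sum_{j=1}^{k}\lambda(\F_j)=\lambda(\B_n)=n+1$, whence some $\F_j$ has $\lambda(\F_j)\ge (n+1)/k$. Taking $n\ge g(P)\cdot k$ forces $\lambda(\F_j)>g(P)$, so $\F_j$ is not $P$-free: it contains a copy of $P$, all of whose sets have colour $j$. Therefore $R_k(P)\le g(P)\cdot k=O(k)$, and with the lower bound $R_k(P)=\Theta(k)$.

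\emph{Main obstacle.} Everything above is routine except the boundedness of $g(P)$; note that the cruder size estimate $\mathrm{La}(n,P)=O_P\bigl(\nchn\bigr)$ fed into $2^n=\sum_j|\F_j|$ would only give $R_k(P)=O(k^2)$, so the Lubell-function sharpening is genuinely needed. I would establish it for $P=\B_t$ by a supersaturation-style double count over maximal chains (or by an induction on $t$ combined with a chain partition) and then deduce the general case from the embedding $P\hookrightarrow\B_{|P|}$; the pigeonhole over colour classes and the Mirsky construction require no further ideas.
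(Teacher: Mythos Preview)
The paper does not prove this theorem---it is quoted from Axenovich--Walzer~\cite{AW}---so there is no in-paper proof to compare against directly. That said, in Section~4 the authors do sketch exactly the upper-bound argument you give: they observe that M\'eroueh's bound $\lambda_n(\F)\le 1000m^716^m$ for $B_m$-free families, combined with the additivity $\sum_i\bar h_N(\F_i)=N+1$ over colour classes, yields $R_k(B_m)\le 1000m^716^m\,k$, and hence $R_k(P)\le Ck$ for general $P$ via the embedding $P\hookrightarrow B_{\dim_2(P)}$. Your Lubell-function pigeonhole is the same mechanism, and your lower bound via the level colouring of $\B_{k-1}$ is the standard one (the paper uses the analogous construction repeatedly, e.g.\ in Theorem~\ref{R3B2} for $R_k(B_2)\ge 2k$).

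Two small remarks. First, your reduction ``$P$-free $\Rightarrow$ $\B_{|P|}$-free'' via principal down-sets is correct but wasteful; using $\dim_2(P)$ in place of $|P|$ (as the paper does) gives a better constant, though of course this does not affect the $\Theta(k)$ conclusion. Second, you rightly flag the Lubell bound $g(P)<\infty$ as the only substantive ingredient and propose to prove it for $\B_t$ by a supersaturation/induction argument; be aware that this was an open conjecture of Lu and Milans (Conjecture~\ref{LM} in the paper) until M\'eroueh settled it, so ``standard methods'' undersells the difficulty---citing~\cite{M} is the honest route, as the paper itself does.
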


Given a coloring $c$ on $\B_n$,
we say $\B_n$ contains a {\em rainbow} $P$ if it contains $P$ and all the subsets forming $P$ are of distinct colors under $c$.
Imitating the rainbow Ramsey number in graph theory, we give the following definition:

\begin{definition}[The Boolean rainbow Ramsey number]
Given two collections of posets $\mathcal{P}=\{P_1,\ldots,P_r\}$ and $\mathcal{Q}=\{Q_1,\ldots,Q_s\}$,
the Boolean rainbow Ramsey number $RR(\mathcal{P},\mathcal{Q})$ is the minimum integer $n$ such that
for any coloring $c$ on $\B_n$, it always contains some monochromatic $P_i$
or some rainbow $Q_j$.
If $\mathcal{P}=\{P\}$ (resp. $\mathcal{Q}=\{Q\}$), then we will simply use $P$ (resp. $Q$) instead of $\{P\}$ (resp. $\{Q\}$).
\end{definition}

It is not evident that $RR(P,Q)$ is finite for any $P$ and $Q$.
Nevertheless, Johnston, Lu, and Milans~\cite{JLM} proved that for $n$ sufficiently large, any coloring on $\B_n$ contains either a monochromatic {\em Boolean algebra}  $B_{alg(r)}$ or a rainbow $B_{alg(s)}$ for any given positive integers $r$ and $s$. A {\em Boolean algebra} $B_{alg(r)}$ in $\B_n$
is such a family $\{S_0\cup(\cup_{i\in I} S_{i}) \mid I\subset [r] \}$, where $S_0,S_1,\ldots, S_r$ are mutually disjoint subsets of $[n]$.

\begin{theorem}{\rm[Theorem 5,~\cite{JLM}]}\label{JLM}
For $N\ge r2^{(2r+1)2^{s-1}-2}$, $\B_N$ contains either a rainbow  $B_{alg(r)}$ or a monochromatic $B_{alg(s)}$ under any coloring.
\end{theorem}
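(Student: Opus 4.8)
The plan is to argue by induction on $s$, with $r$ fixed. Write $M_s=2^{(2r+1)2^{s-1}-2}$, so that $M_1=2^{2r-1}$ and $M_s=(2M_{s-1})^2$, and prove the equivalent statement: \emph{if $\B_N$ with $N\ge rM_s$ is coloured and contains no monochromatic $B_{alg(s)}$, then it contains a rainbow $B_{alg(r)}$}. The engine at every level is a greedy construction in which we build a rainbow Boolean algebra one block at a time, always keeping in reserve a large Boolean interval $[A,A\cup W]\cong\B_{|W|}$ whose ground set $W$ is disjoint from every block used so far; the induction is driven by analysing what must happen when the greedy process gets stuck.

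\textbf{Base case $s=1$.} Here ``no monochromatic $B_{alg(1)}$'' is exactly the assertion that every chain is rainbow, i.e. any two comparable sets get distinct colours. Suppose we have built a rainbow $B_{alg(j)}$ with element set $\{A_I=S_0\cup\bigcup_{i\in I}S_i:I\subseteq[j]\}$, colours $\gamma_I=c(A_I)$ pairwise distinct, together with a reservoir $[A,A\cup W]$ with $W$ disjoint from $S_0,\dots,S_j$. To extend, we split $W=W_1\sqcup W_2$ and seek $T\subseteq W_1$ such that the $2^j$ sets $A_I\cup T$ receive $2^j$ colours that are new and pairwise distinct. The coincidences forbidden by comparability are automatic from the hypothesis, and fixing a maximal chain in $W_1$ makes each map $t\mapsto c(A_I\cup T_t)$ injective along it; a counting argument inside such a chain then produces a usable $T$ while leaving $W_2$ (half the reservoir) untouched for the next step. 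Iterating through $j=0,\dots,r-1$ shows that a reservoir of size $r M_1=r2^{2r-1}$ is enough, which matches the claimed bound for $s=1$.

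\textbf{Inductive step $s\ge2$.} Assume no monochromatic $B_{alg(s)}$ and run the same greedy construction. If it terminates successfully we are done, so suppose it gets stuck at stage $j<r$: we hold a rainbow $B_{alg(j)}$ with elements $A_I$ and colours $\gamma_I$ and a reservoir $[A,A\cup W]$ whose size we have arranged to be at least $rM_{s-1}$, yet \emph{every} $T\subseteq W_1$ (with $W=W_1\sqcup W_2$ as above) produces one of the $\le 2^{2j+1}$ forbidden coincidences. Pigeonholing over the $2^{|W_1|}$ choices of $T$, one fixed coincidence holds for at least a $2^{-2r-1}$ fraction of them. If it has the form $c(A_I\cup T)=\gamma_{I'}$, then a subfamily of density $\ge 2^{-2r-1}$ of $[A_I,A_I\cup W_1]\cong\B_{|W_1|}$ is monochromatic in colour $\gamma_{I'}$; invoking a Gunderson--R\"odl--Sidorenko-type density lemma (a positive-density subfamily of $\B_m$ contains $B_{alg(s)}$ once $m$ is large) this already yields a monochromatic $B_{alg(s)}$, contradicting the hypothesis, \emph{provided $|W_1|$ is large enough in terms of $s$ and $r$}. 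If instead the coincidence has the form $c(A_I\cup T)=c(A_{I'}\cup T)$, we first pigeonhole on the common colour and then pass to the inductive hypothesis at $s-1$ on the sub-lattice $[A_I,A_I\cup W_1]$ — using the reserved block $W_2$ to push the resulting monochromatic $B_{alg(s-1)}$ up to a monochromatic $B_{alg(s)}$ — again a contradiction. In every branch either a rainbow $B_{alg(r)}$ appears or the hypothesis is contradicted, so the greedy process must succeed; calibrating the constants so that the reservoir never drops below the threshold required by the density lemma (resp. by the $s-1$ hypothesis) before this dichotomy triggers is exactly what forces $N\ge rM_s=r(2M_{s-1})^2$, closing the induction.

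\textbf{Main obstacle.} The delicate points all lie in the inductive step. One must (a) control to within a bounded factor how much the reservoir shrinks each time a block is appended to the rainbow algebra, so that after $r$ steps it is still usable; (b) upgrade a merely \emph{dense} monochromatic subfamily of a Boolean lattice to an actual monochromatic sub-Boolean-algebra, which is where the density lemma enters and where its precise dependence on the density $2^{-2r-1}$ and on the dimension $s$ is felt; and, hardest of all, (c) turn a monochromatic $B_{alg(s-1)}$ into a monochromatic $B_{alg(s)}$ using only the residual reservoir, since the single extra block must simultaneously preserve the colour of all $2^{s-1}$ elements — iterating that extension coordinate by coordinate is what makes a single decrement of $s$ cost a squaring of the ground set, and reconciling this with the loss in (a)--(b) is precisely the accounting that reproduces $M_s=(2M_{s-1})^2$.
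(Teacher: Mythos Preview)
This theorem is quoted in the paper from Johnston--Lu--Milans without proof, so there is no ``paper's own proof'' to compare to; I can only assess your sketch on its own merits.

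The high-level architecture (induction on $s$, greedy construction of a rainbow algebra inside a shrinking reservoir, and an appeal to a Boolean-algebra density lemma when the greedy step fails) is reasonable and is indeed the flavour of the original argument, but your inductive step contains a real gap. In the second coincidence case, $c(A_I\cup T)=c(A_{I'}\cup T)$ for a $2^{-(2r+1)}$-fraction of $T$, you propose to ``first pigeonhole on the common colour''. That move does not work: the coloring is arbitrary and the number of colours may be unbounded, so after pigeonholing on the value of $c(A_I\cup T)$ you may be left with a family of \emph{arbitrarily small} density, far below the threshold any density lemma requires. Nothing later in the sketch repairs this; the recursion $M_s=(2M_{s-1})^2$ only absorbs losses that are \emph{bounded in terms of $r$}, not losses proportional to the number of colours.

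The related step you flag yourself --- promoting a monochromatic $B_{alg(s-1)}$ to a monochromatic $B_{alg(s)}$ by adjoining a block from $W_2$ --- is also not justified as written: there is no reason any single $T_s\subseteq W_2$ should preserve the colour of all $2^{s-1}$ elements simultaneously, and ``iterating coordinate by coordinate'' does not obviously help since each new coordinate imposes $2^{s-1}$ simultaneous colour constraints, not one. In the actual Johnston--Lu--Milans argument these difficulties are handled via Lubell-function machinery rather than by pigeonholing on colours; you would need either to import that, or to replace the colour-pigeonhole with an argument that only loses a factor depending on $r$ (for instance, by exploiting that the coincidence $c(A_I\cup T)=c(A_{I'}\cup T)$ already encodes a monochromatic $B_{alg(1)}$ indexed by $T$ and then inducting on \emph{that} structure). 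As it stands, the sketch does not close.
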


Containing the Boolean algebra $B_{alg(r)}$ implies containing $\B_r$
since  $B_{alg(r)}$ is isomorphic to $\B_r$.
So $RR(P,Q)$ is finite when $P$ and $Q$ are both Boolean lattices.
In addition, Trotter~\cite{T75} introduced the {\em 2-dimension} of a poset $P$,
$\dim_2(P)$ is the minimum number $n$ for which $\B_n$ contains $P$ and proved that $\dim_2(P)$ is finite for any poset $P$.
As a consequence, $RR(P,Q)$ exists for any $P$ and $Q$.

Analogous to~\cite{AW}, Cox and Stolee~\cite{CS} studied the existence of the monochromatic weak subposets in the Boolean lattices.
Here a {\em weak subposet} means an injection from the poset to the Boolean lattice,
which preserves the inclusion relation but not necessary the non-inclusion relation.
A very recent paper~\cite{C7} by the third author and others presents the results on
the Ramsey properties of both types of subposets in the Boolean lattices.

In this paper, we study the strong version of the Boolean rainbow Ramsey number and the relations between
it and the Boolean Ramsey number.
We determine the exact value of $RR(P,Q)$ for specific posets and use the results to derive the upper and lower bounds for $RR(P,Q)$ for general $P$ and $Q$.
We focus on the {\em antichains} $A_n$, the {\em Boolean posets} $B_n$, and the {\em chains} $C_n$,
where $A_n$ is a set of $n$ elements without partial order relation,
$B_n$ is isomorphic to $\B_n$, and $C_n$ is a totally ordered set of $n$ elements.
In the paper, the scribe $\B_n$ refers to the underlying Boolean lattice we color,
and the capital $B_n$ refers to the desired monochromatic or rainbow Boolean posets.
Table~\ref{TAB} is the summary of our results on $RR(P,Q)$ when $P$ and $Q$ are one of the three type of posets.
\begin{table}[h]
\begin{center}
\begin{tabular}{|c|c|c|}
\hline $P$ & $Q$ & $RR(P,Q)$\\
\hline $A_m$ & $A_n $ & $\min\{N\mid \binom{N}{\lfloor N/2\rfloor}\ge (m-1)(n-1)+1\}$. \\
\hline $A_2$ & $C_n$ & $n$ \\
\hline $B_m$  & $B_1$ & $m$\\
\hline $B_1$ & $B_n$ & $2^n-1$\\
\hline $B_2$ & $B_2$ & 6 \\
\hline $C_m$ & $A_n$ & $\left\{
\begin{array}{ll}
n+2,& m=2\mbox{ and }n\ge 3 \\
(m-1)(n-1)+2,& m=n=2,\mbox{ or }m\ge 3\mbox{ and } n\ge 2\\

\end{array}
\right.$ \\
\hline $C_m$ & $B_n$ & $(m-1)(2^n-1)$\\
\hline $C_m$ & $C_n$ & $(m-1)(n-1)$ \\
\hline
\end{tabular}
\end{center}
\caption{$RR(P,Q)$ of antichains, Boolean posets, and chains}\label{TAB}
\end{table}

The rest of the paper is organized as follows.
In Section 2, we determine all but one values listed in Table~\ref{TAB}.
The exception $RR(B_2,B_2)$ will be postponed to Section 3 after we establish some bounds between $RR(P,Q)$ and $R_k(P)$.
The upper and lower bounds of $RR(P,Q)$ for general $P$ and $Q$ will be studied
and a by-product of the Boolean Ramsey number $R_3(B_2)=6$ will be shown in Section 3.
The last section contains the discussions of the upper bounds for $R_k(P)$ and $RR(P,Q)$ and the Forbidden subposet problems.

\section{The exact Boolean rainbow Ramsey numbers}

When $P$ and $Q$ are both chains or both antichains, the Boolean rainbow Ramsey numbers can be determined by simple arguments.

\begin{proposition}\label{CMCN}
For the chains, we have
 $RR(C_m,C_n)=(m-1)(n-1)$.
\end{proposition}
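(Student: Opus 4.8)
The plan is to prove the two matching inequalities $RR(C_m,C_n)\le (m-1)(n-1)$ and $RR(C_m,C_n)\ge (m-1)(n-1)$ separately, each resting on one short idea.

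For the upper bound I would set $N=(m-1)(n-1)$, fix an arbitrary coloring $c$ of $\B_N$, and restrict attention to a single maximal chain $\emptyset=S_0\subset S_1\subset\cdots\subset S_N=[N]$, which has $N+1=(m-1)(n-1)+1$ members. By pigeonhole, either some color is used on at least $m$ of the $S_i$, or at least $n$ distinct colors occur among the $S_i$. In the first case those $m$ sets are pairwise comparable and monochromatic, hence form a monochromatic $C_m$; in the second case, choosing one $S_i$ of each of $n$ distinct colors yields $n$ pairwise comparable sets of distinct colors, i.e.\ a rainbow $C_n$. Thus every coloring of $\B_N$ has a monochromatic $C_m$ or a rainbow $C_n$.

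For the lower bound it suffices to exhibit a coloring of $\B_N$ with $N=(m-1)(n-1)-1$ avoiding both configurations, and I would use the ``level'' coloring $c(S)=\lfloor |S|/(m-1)\rfloor$ (taking $m\ge 2$; $m=1$ is trivial). Since $0\le |S|\le N=(m-1)(n-1)-1$, the colors used lie in $\{0,1,\dots,n-2\}$, so at most $n-1$ colors appear and no chain — in particular no copy of $C_n$ — can be rainbow. On the other hand, the preimage of each color is a union of exactly $m-1$ consecutive levels of $\B_N$, and a chain of subsets is strictly increasing in size, so a monochromatic chain has at most $m-1$ members; hence no monochromatic $C_m$. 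This gives $RR(C_m,C_n)>N$, and together with the upper bound the equality follows.

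There is no real obstacle here; the only points that need care are (i) that the count $N+1=(m-1)(n-1)+1$ is precisely what the pigeonhole step needs and that ``pairwise comparable'' upgrades the chosen sets to a genuine $C_m$ or $C_n$, and (ii) that the layered coloring blocks both targets simultaneously — the block size $m-1$ is forced by ``no monochromatic $C_m$'' and the number of blocks $n-1$ by ``no rainbow $C_n$'', and it is exactly the product structure $(m-1)(n-1)$ that makes these two constraints fit on $N+1$ levels, which is why the bound is tight.
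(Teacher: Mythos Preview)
Your proof is correct and follows essentially the same approach as the paper: the upper bound via pigeonhole on a single maximal chain of length $(m-1)(n-1)+1$, and the lower bound via the layered coloring that groups $m-1$ consecutive levels into each color class. The only cosmetic difference is that you write the color as $\lfloor |S|/(m-1)\rfloor$ while the paper describes the same partition in words.
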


\begin{proof}
For $N<(m-1)(n-1)$, we give a coloring $c$ to $\B_N$ by coloring $i$ to the subsets in the consecutive $m-1$ levels
$\binom{[N]}{(i-1)(m-1)}\cup\binom{[N]}{(i-1)(m-1)+1}\cdots\cup\binom{[N]}{i(m-1)-1}$.
Since a chain $C_m$ in the Boolean lattices consists of $m$ subsets of distinct sizes, $\B_N$ does not
contain a monochromatic $C_m$ under $c$.
In the coloring $c$, the number of color classes is at most $n-1$, so $\B_N$ does not contain a rainbow $C_n$ as well.
When $N\ge (m-1)(n-1)$, consider any coloring on the chain
$\varnothing \subset [1] \subset [2]\subset \cdots\subset [N]$ in $\B_N$.
By pigeonhole principle, it contains at least $n$ subsets of distinct colors or at least $m$ subsets of the same color, as desired.
\end{proof}

\begin{proposition}\label{AMAN}
For the antichains, we have
 $RR(A_m,A_n)=N_{m,n}$, where $N_{m,n}$ is the minimum integer such that $\binom{N_{m,n}}{\lfloor N_{m,n}/2\rfloor}\ge (m-1)(n-1)+1$.
\end{proposition}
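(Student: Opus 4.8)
The plan is to establish the matching upper and lower bounds separately. Both rely only on elementary facts about $\B_N$: that its $\lfloor N/2\rfloor$-th level is an antichain of size $\binom{N}{\lfloor N/2\rfloor}$, and that $\B_N$ admits a symmetric chain decomposition into exactly $\binom{N}{\lfloor N/2\rfloor}$ chains (Dilworth's theorem / the de Bruijn--Tengbergen--Kruyswijk decomposition).

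\emph{Upper bound.} I would take $N=N_{m,n}$ and let $c$ be an arbitrary coloring of $\B_N$. The middle level $\binom{[N]}{\lfloor N/2\rfloor}$ is an antichain of size $\binom{N}{\lfloor N/2\rfloor}\ge(m-1)(n-1)+1$. If $c$ assigns at least $n$ distinct colors to this level, choosing one subset of each of $n$ colors gives a rainbow $A_n$. Otherwise at most $n-1$ colors occur on the middle level, and by the pigeonhole principle some color class inside it has size at least $\lceil((m-1)(n-1)+1)/(n-1)\rceil=m$, yielding a monochromatic $A_m$. Hence $RR(A_m,A_n)\le N_{m,n}$.

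\emph{Lower bound.} Here I would set $N=N_{m,n}-1$, so that by the minimality of $N_{m,n}$ we have $\binom{N}{\lfloor N/2\rfloor}\le(m-1)(n-1)$. Take a symmetric chain decomposition $\B_N=\C_1\cup\dots\cup\C_k$ with $k=\binom{N}{\lfloor N/2\rfloor}\le(m-1)(n-1)$. Partition the collection of chains into $n-1$ blocks, each of size at most $m-1$ (possible since $k\le(m-1)(n-1)$), and color every subset lying in a chain of block $j$ with color $j$. Each color class is then a union of at most $m-1$ chains; since an antichain meets each chain in at most one element, no color class contains an $m$-element antichain, so there is no monochromatic $A_m$; and since only $n-1$ colors are used, there is no rainbow $A_n$ either. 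Thus $RR(A_m,A_n)>N_{m,n}-1$, and combining the two bounds yields $RR(A_m,A_n)=N_{m,n}$.

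The whole argument is a two-sided application of the pigeonhole principle wrapped around the extremal and decomposition theorems for the Boolean lattice, so I do not expect a genuine obstacle. The step needing the most care is the lower-bound construction: one must observe that ``a union of $m-1$ chains'' is precisely the obstruction to an $m$-element antichain, and that $k\le(m-1)(n-1)$ chains can always be regrouped into $n-1$ blocks of at most $m-1$ chains each — beyond this bookkeeping (and the harmless degenerate cases $m=1$ or $n=1$) nothing should be subtle.
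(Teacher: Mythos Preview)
Your proof is correct and follows essentially the same approach as the paper: the upper bound via pigeonhole on the middle level $\binom{[N]}{\lfloor N/2\rfloor}$, and the lower bound via a symmetric chain decomposition with groups of at most $m-1$ chains per color. The only cosmetic difference is that the paper assigns color $i$ to chains $\C_{(i-1)(m-1)+1},\dots,\C_{i(m-1)}$ directly rather than first partitioning into $n-1$ blocks, but this is the same construction.
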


\begin{proof}
For $N<N_{m,n}$, partition $\B_N$ into $\binom{N}{\lfloor N/2\rfloor}$ chains, $\C_1, \ldots$, $\C_{\binom{N}{\lfloor N/2\rfloor}}$
using the symmetric chain decompositions~\cite{BEK,GK}. Then color every $m-1$ chains $\C_{(i-1)(m-1)+1},\ldots$, $\C_{i(m-1)}$ by $i$.
Hence any antichain in a color class has size at most $m-1$. Also, the coloring contains at most $n-1$ colors. Thus, there is no rainbow $A_n$.
For $N\ge N_{m,n}$, the level $\binom{[N]}{\lfloor N/2\rfloor}$ contains at least  $(m-1)(n-1)+1$ subsets.
Then any coloring on $\B_N$ contains at least $m$ subsets of the same color or $n$ subsets of distinct colors in the level $\binom{[N]}{\lfloor N/2\rfloor}$.
\end{proof}

Determining $RR(B_m,B_n)$ is not as simple as the previous tasks.
We manage to solve $RR(B_m,B_n)$ for certain cases of $m$ and $n$ in Theorem~\ref{B2B2}.
The proof of Theorem~\ref{B2B2} (3) is involved in more connections between the Boolean Ramsey numbers and the Boolean rainbow Ramsey numbers.
Here we will first prove (1) and (2)
postpone the proof of (3) to next section.

\begin{theorem}\label{B2B2}
For the Boolean posets, we have
\begin{description}
\item{(1)} $RR(B_m,B_1)=m$;
\item{(2)} $RR(B_1,B_n)=2^n-1$;
\item{(3)} $RR(B_2,B_2)=6$.
\end{description}
  \end{theorem}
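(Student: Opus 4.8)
\emph{Parts (1) and (2).} For (1), $B_1$ is a two-element chain, so a coloring of $\B_n$ has no rainbow $B_1$ exactly when every comparable pair is monochromatic, i.e.\ (by connectivity of $\B_n$) when the coloring is constant; hence the colorings with neither a monochromatic $B_m$ nor a rainbow $B_1$ are precisely the constant colorings of a $\B_n$ not containing $B_m$, and these exist exactly for $n\le m-1$, giving $RR(B_m,B_1)=m$. For (2), coloring $S\subseteq[2^n-2]$ by $|S|$ uses only $2^n-1$ colors (so no rainbow $B_n$) and gives comparable sets different colors (so no monochromatic $B_1$), whence $RR(B_1,B_n)\ge 2^n-1$; conversely, if a coloring of $\B_{2^n-1}$ has no monochromatic $B_1$ then every color class is an antichain, a maximal chain already displays $2^n$ distinctly colored sets, and a short argument turns these into a rainbow copy of $B_n$ (for $n=2$ this is immediate: if two incomparable atoms, or two incomparable coatoms, get different colors we are done, and otherwise $\varnothing$, an atom, a complementary coatom and $[2^n-1]$ form a rainbow $B_2$).

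\emph{Part (3), lower bound.} Color $S\subseteq[5]$ by $1,2,3$ according as $|S|\in\{0,1\}$, $\{2,3\}$, $\{4,5\}$. Only three colors appear, so $\B_5$ contains no rainbow $B_2$; and since any copy $\{A,B,C,D\}$ of $B_2$ with $A\subsetneq B\subsetneq D$ and $A\subsetneq C\subsetneq D$ satisfies $|A|<|B|<|D|$, it uses at least three distinct sizes, so no color class — each contained in two consecutive levels of $\B_5$ — can contain a $B_2$. Thus $RR(B_2,B_2)\ge 6$, and likewise $R_3(B_2)\ge 6$.

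\emph{Part (3), upper bound.} I would deduce $RR(B_2,B_2)\le 6$ from the bounds between $RR$ and the Boolean Ramsey number proved in Section~3, in the shape $RR(P,B_2)\le R_3(P)$: a coloring of $\B_N$ with no rainbow $B_2$ is rigid enough on three-element chains that its color classes can be merged into at most three classes without creating a monochromatic copy of $P$, so a coloring of $\B_{R_3(P)}$ with no rainbow $B_2$ and no monochromatic $P$ would contradict the definition of $R_3(P)$. Taking $P=B_2$ reduces everything to $R_3(B_2)\le 6$, i.e.\ that every $3$-coloring of $\B_6$ has a monochromatic $B_2$. For this I would argue by contradiction from a partition of the $64$ subsets of $[6]$ into $B_2$-free families $\F_1,\F_2,\F_3$, using that in a $B_2$-free family the members strictly between any two comparable members of it form a chain and that no $\F_i$ contains three consecutive levels; I would then track where $\varnothing$, $[6]$, the six atoms and the six coatoms fall (for instance $\varnothing$ together with two incomparable atoms and any common superset of them is a $B_2$, and dually for the coatoms and $[6]$), and push the resulting restrictions through a pigeonhole count on the middle levels $\binom{[6]}{2},\binom{[6]}{3},\binom{[6]}{4}$ to force some $\F_i$ to contain a $2$-set, two incomparable $3$-sets above it, and a $4$-set above both — a monochromatic $B_2$.

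\emph{Main obstacle.} The crux is $R_3(B_2)\le 6$. A counting bound is hopeless here, since two full middle levels already form a $B_2$-free family of size $\binom62+\binom63=35$, well above $64/3$, so three $B_2$-free families could in principle cover far more than $64$ sets; what must be exploited is that three such families cannot be fitted together inside $\B_6$, and extracting a concrete monochromatic diamond requires the structural facts above together with a fairly delicate case analysis on how the atoms, coatoms and middle levels are split. A secondary difficulty is the reduction $RR(P,B_2)\le R_3(P)$ itself: colorings with no rainbow $B_2$ may genuinely use more than three colors, so the merging of color classes has to be performed carefully so as not to introduce a monochromatic $P$.
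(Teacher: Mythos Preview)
Your arguments for (1) and for the lower bounds in (2) and (3) are correct and essentially the same as the paper's.

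For the upper bound in (2), however, there is a genuine gap. Having $2^n$ distinct colors along a maximal chain of $\B_{2^n-1}$ does not by itself yield a rainbow $B_n$: a copy of $B_n$ is not a chain, and its $2^n$ elements must sit in the right partial-order configuration, not merely carry distinct colors. Your parenthetical handles $n=2$, but for general $n$ no ``short argument'' is available. The paper constructs the rainbow $B_n$ by an explicit greedy procedure: writing the ground set as $\{x_1,\dots,x_n\}\cup[N-n]$, it builds sets $S_{h,k}=X_{h,k}\cup[m_{h,k}]$ (one for each subset $X_{h,k}\subset\{x_1,\dots,x_n\}$, processed in an order compatible with size) by choosing $m_{h,k}$ minimal so that $S_{h,k}$ receives a colour not yet used, and then bounds the $m_{h,k}$ via an auxiliary ``principal chain'' argument to show the construction does not run out of room when $N\ge 2^n-1$. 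This is the substantive part of (2), and you have not supplied it.

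For the upper bound in (3) your overall plan --- reduce to $R_3(B_2)\le 6$ --- agrees with the paper in that both ultimately rest on $R_3(B_2)=6$, but the \emph{reduction} you propose is different and unproved. You assert $RR(P,B_2)\le R_3(P)$ via a merging of colour classes, yet you give no mechanism for the merge, and (as you yourself note) colourings with no rainbow $B_2$ may use more than three colours, so a merge could create a monochromatic $P$; this step is a real gap. The paper does not attempt any such reduction. Instead, for a $k$-colouring of $\B_6$ with $k\ge 4$ it argues directly: if $c(\varnothing)=c([6])$ it passes to the disjoint copies $\B_{i,j},\B_{j,i}$ of $\B_4$ and invokes the lemma $RR(B_2,\{\vee,\wedge\})=R_2(B_2)=4$ to obtain either a monochromatic $B_2$ or a rainbow $\vee$/$\wedge$, which $\varnothing$ or $[6]$ completes to a rainbow $B_2$; if $c(\varnothing)\ne c([6])$ it picks $X,Y$ of two further colours, reduces to $X\subset Y$ inside some $\B_{i,j}$, and finishes via the complementary $\B_{j,i}$ together with $R_2(B_2)=4$. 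Finally, your outline of $R_3(B_2)\le 6$ points in the right direction --- the paper's proof does pivot on where $\varnothing$, $[6]$, the atoms and the coatoms land --- but the actual argument is a lengthy structured case analysis (through auxiliary families $\B_{i,j}^{\updownarrow}$ and an element $X_{i,j}$ of colour~3 in each) rather than a pigeonhole count on the middle levels; as you correctly observe, counting alone cannot close the case.
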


\begin{proof}
(1) For $N<m$, we color every set by the same color to avoid a monochromatic $B_m$ and a rainbow $B_1$ in $\mathcal{B}_N$.
When $N\ge m$, let $c$ be any coloring  on $\mathcal{B}_N$.
If no rainbow $B_1$ exists under $c$,
then $c(X)=c(\varnothing)$ for any $X\subseteq [N]$.
This leads to a monochromatic $B_m$ in $\mathcal{B}_N$.

(2) For $N< 2^n-1$, we can color all subsets in the level $\binom{[N]}{i}$ by  $i$.
Then every color class is an antichain.
Moreover, to have a rainbow $B_n$, we need at least $2^n$ different colors,
which is greater than the number of color in the coloring.
Thus, $RR(B_1,B_n)\ge 2^n-1$.

For $N\ge 2^n-1$, let $S$ be the union of two disjoint sets $\{x_1,\ldots, x_n\}$ and $[N-n]$,
and $\B_{S}:=\{\{X\mid X\subseteq S\},\subseteq\}$.
Without loss of generality, we color on $\B_{S}$.
Assume $\B_S$ does not contain a monochromatic $B_1$ under a coloring $c$.
Then we will prove that it contains a rainbow $B_n$ under $c$.

For the subsets of $\{x_1,\ldots, x_n\}$, we fix an arbitrary order of subsets of the same size.
Let $X_{h,k}\subset \{x_1,\ldots, x_n\}$ be the $k$-th subset of size $h$.
First define $S_{0,1}=X_{0,1}$ and $m_{0,1}=0$.  For $h\ge 1$, let $m_{h,k}$ be the smallest nonnegative integer
such that $m_{h,k}\ge m_{h-1,k'}$ for any $X_{h-1,k'}\subset X_{h,k}$ and
$c(X_{h,k}\cup[m_{h,k}])$ ($X_{h,k}\cup[m_{h,k}]=X_{h,k}$ if $m_{h,k}=0$) is distinct from any color in $\{c(S_{h',k'})\mid h'<h,\mbox{ or } h'=h\mbox{ and }k'<k\}$.
If $m_{h,k}\le N-n$, then define $S_{h,k}=X_{h,k}\cup[m_{h,k}]$.
Clearly, if $S_{h,k}$ exists for any $0\le h\le n$ and $1\le k \le \binom{n}{h}$, then these $2^n$ sets form a copy of rainbow $B_n$.
Thus, it suffices to show that when $N\ge 2^n-1$, all $S_{h,k}$ exist.

Suppose, on the contrary, some $S_{h,k}$ does not exist.
Let $h_1$ and $k_1$ be the ``smallest'' pair such that $S_{h_1,k_1}$ does not exist but $S_{h,k}$ exists for $h<h_1$ and $1\le k\le \binom{n}{h}$, or for $h=h_1$ and $k<k_1$.
For $h<h_1$ and $1\le k\le \binom{n}{h}$, or for $h=h_1$ and $k<k_1$, we call a chain $\C_{h,k}$ in $\B_s$ a {\em principal chain} of $S_{h,k}$ if it consists of $|S_{h,k}|+1$ sets with the largest one $S_{h,k}$ such that all the colors of the sets in $\C_{h,k}$ are in $\{c(S_{h',k'})\mid h'<h,\mbox{ or } h'=h\mbox{ and }k'\le k\}$.
We claim that the principal chain exists for $h<h_1$ and $1\le k\le \binom{n}{h}$, or for $h=h_1$ and $k<k_1$.
This can be done by induction on $h$. It is clear $\C_{0,1}=\{\varnothing\}$.
For $S_{h,k}$ with $h\ge 1$, let $m_{h-1,k^*}=\max_{k'}\{m_{h-1,k'}\mid X_{h-1,k'}\subset X_{h,k}\}$.
By induction, the principal chain $\C_{h-1,k^*}$ exists, and all the colors of the sets in $\C_{h-1,k^*}$ are in
$\{c(S_{h',k'})\mid h'<h-1,\mbox{ or } h'=h-1\mbox{ and }k'\le k^*\}$.
Moreover, by the definition of $m_{h,k}$, the colors $c(X_{h,k}\cup [i])$, $m_{h-1,k^*}\le i \le m_{h,k}-1$,
are in $\{c(S_{h',k'})\mid h'<h,\mbox{ or } h'=h\mbox{ and }k'< k\}$. Concatenating the chain $\C_{h-1,k^*}$ to
the chain $\{X_{h,k}\cup[i]\mid m_{h-1,k^*}\le i\le  m_{h,k}\}$, we obtain the principal chain of $S_{h,k}$.
Since $S_{h_1,k_1}$ does not exist, it means the colors
$c(X_{h_1,k_1}\cup[i])$, $m_{h_1-1,k^{**}}\le i \le N-n$, are all in
$\{c(S_{h,k})\mid h<h_1,\mbox{ or } h=h_1\mbox{ and }k< k_1\}$, where $m_{h_1-1,k^{**}}=\max\{m_{h_1-1,k}\mid X_{h_1-1,k}\subset X_{h_1,k_1}\}$.
In other words, the number of the colors of the subsets in the chain
$\C_{h_1-1,k^{**}}\cup\{X_{h_1,k_1}\cup [i]\mid  m_{h_1-1,k^{**}}\le i\le N -n\}$ is at most $\sum_{i=0}^{h_1-1}\binom{n}{i}+k_1-1$.
Note that this chain is a rainbow chain. So we have
\[|S_{h_1-1,k^{**}}|+1+(N-n-m_{h_1-1,k^{**}}+1)\le \sum_{i=0}^{h_1-1}\binom{n}{i}+k_1-1.\]
Since $|S_{h_1-1,k^{**}}|=|X_{h_1-1,k^{**}}|+m_{h_1-1,k^{**}}$, we simplify the previous inequality to get
$h_1+N-n+1\le \sum_{i=0}^{h_1-1}\binom{n}{i}+k_1-1$, and hence
$N\le \sum_{i=0}^{h_1-1}\binom{n}{i}+k_1-1 -h_1+n-1<2^n-1$. This contradicts to our assumption of $N$.
\end{proof}

The idea of the principal chain in the proof of Theorem~\ref{B2B2} (2) can be applied to solve the Boolean rainbow Ramsey number for mixed types of posets:

\begin{theorem}\label{CMBN}
For the chains and Boolean posets, we have
$RR(C_m,B_n)=(m-1)(2^n-1)$.
\end{theorem}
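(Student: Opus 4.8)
The plan is to prove both bounds by combining the chain-partition idea from Proposition~\ref{CMCN} with the principal-chain technique of Theorem~\ref{B2B2}~(2). For the lower bound $RR(C_m,B_n)\ge (m-1)(2^n-1)$, I would take $N=(m-1)(2^n-1)-1$ and color $\B_N$ so that no monochromatic $C_m$ and no rainbow $B_n$ appears. The natural coloring is the "block of levels" coloring: partition the levels $\binom{[N]}{0},\ldots,\binom{[N]}{N}$ into consecutive blocks of $m-1$ levels each, and give all subsets in the $i$-th block color $i$. Since a monochromatic copy of $C_m$ would require $m$ subsets of distinct sizes all of the same color, but each color class spans only $m-1$ consecutive levels, no monochromatic $C_m$ exists. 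The number of colors used is $\lceil (N+1)/(m-1)\rceil$, and for $N=(m-1)(2^n-1)-1$ this equals $2^n-1<2^n$, so there are too few colors to form a rainbow $B_n$ (which needs $2^n$ subsets of distinct colors). This establishes the lower bound.

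For the upper bound $RR(C_m,B_n)\le (m-1)(2^n-1)$, I would fix $N=(m-1)(2^n-1)$ and a coloring $c$ of $\B_N$ with no monochromatic $C_m$, and produce a rainbow $B_n$. As in Theorem~\ref{B2B2}~(2), I work inside $\B_S$ where $S=\{x_1,\ldots,x_n\}\cup[N-n]$, fix an arbitrary order $X_{h,1},\ldots,X_{h,\binom{n}{h}}$ of the $h$-subsets of $\{x_1,\ldots,x_n\}$, and greedily build sets $S_{h,k}=X_{h,k}\cup[m_{h,k}]$ by choosing $m_{h,k}$ minimal subject to (i) $m_{h,k}\ge m_{h-1,k'}$ for every $X_{h-1,k'}\subset X_{h,k}$ and (ii) $c(S_{h,k})$ differs from all previously chosen colors $c(S_{h',k'})$. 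The only change from the Boolean-poset argument is that condition (ii) now need only fail when the color of $X_{h,k}\cup[i]$ has already been used — but here is the crucial point: since $c$ has no monochromatic $C_m$, any chain of sets of the form $X_{h,k}\cup[i]$ for $i$ in an interval uses each color at most $m-1$ times, not just once.

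The main obstacle — and the heart of the proof — is the counting that shows all $S_{h,k}$ exist when $N=(m-1)(2^n-1)$. As in Theorem~\ref{B2B2}~(2), I would argue by contradiction: let $(h_1,k_1)$ be the first pair for which $S_{h_1,k_1}$ fails to exist, build the principal chain $\C_{h_1-1,k^{**}}$ through $S_{h_1-1,k^{**}}$ (with $m_{h_1-1,k^{**}}$ the max over predecessors), and note that $S_{h_1,k_1}$ failing means every $X_{h_1,k_1}\cup[i]$ with $m_{h_1-1,k^{**}}\le i\le N-n$ receives a color already among the at most $\sum_{j=0}^{h_1-1}\binom{n}{j}+k_1-1$ used colors. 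The chain $\C_{h_1-1,k^{**}}\cup\{X_{h_1,k_1}\cup[i]\}$ has length $|S_{h_1-1,k^{**}}|+1+(N-n-m_{h_1-1,k^{**}}+1)=h_1+N-n+1$, and — using that $c$ has no monochromatic $C_m$, so each of the at most $\sum_{j=0}^{h_1-1}\binom{n}{j}+k_1-1$ colors appears at most $m-1$ times on this chain — I get
\[
h_1+N-n+1\le (m-1)\left(\sum_{j=0}^{h_1-1}\binom{n}{j}+k_1-1\right).
\]
Since $\sum_{j=0}^{h_1-1}\binom{n}{j}+k_1-1\le 2^n-1$ and $h_1\ge 1$, this forces $N<(m-1)(2^n-1)$, contradicting the choice of $N$. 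The delicate points to get right are that the principal chain really is a chain whose color multiset is controlled (the argument that concatenation works carries over verbatim from Theorem~\ref{B2B2}~(2)), and that the "no monochromatic $C_m$" hypothesis is applied to the right chain — namely the principal chain together with the extension, which is a genuine chain in $\B_S$, so any color class within it that had $m$ members would give a monochromatic $C_m$.
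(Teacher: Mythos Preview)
Your proposal is correct and follows the paper's proof essentially verbatim: the lower bound uses the same block-of-levels coloring, and the upper bound reuses the principal-chain construction from Theorem~\ref{B2B2}(2) with the single modification that each color can occur at most $m-1$ times along the chain, yielding the inequality $h_1+N-n+1\le (m-1)\bigl(\sum_{j=0}^{h_1-1}\binom{n}{j}+k_1-1\bigr)$. One small remark: the final deduction $N<(m-1)(2^n-1)$ does not follow from $t\le 2^n-1$ and $h_1\ge 1$ alone (that only gives $N\le (m-1)(2^n-1)+n-1-h_1$); you need the sharper bound $t\le \sum_{j=0}^{h_1}\binom{n}{j}-1$ coming from $k_1\le\binom{n}{h_1}$, together with $\sum_{j=h_1+1}^{n}\binom{n}{j}\ge n-h_1$, exactly as in the simplification at the end of Theorem~\ref{B2B2}(2).
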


\begin{proof} For $N<(m-1)(2^n-1)$, we give a coloring to $\B_N$ by coloring $i$ to the sets in the consecutive $m-1$ levels
$\binom{[N]}{(i-1)(m-1)}\cup\binom{[N]}{(i-1)(m-1)+1}\cdots\cup\binom{[N]}{i(m-1)-1}$. On the one hand,
since a color class contains at most $m-1$ different sizes of subsets,
$\B_N$ does not contain a monochromatic $C_m$.
On the other hand, the number of colors is at most $2^n-1$. Hence it does not contain a rainbow $B_n$.

Now for $N\ge (m-1)(2^n-1)$ define $\B_S$ as in the proof of Theorem~\ref{B2B2}(2).
We consider a coloring $c$ on $\B_s$ not containing a monochromatic $C_m$,
and let $X_{h,k}$, $m_{h,k}$, $S_{h,k}$, and $\C_{h,k}$ be defined similarly as before.
If some $S_{h_1,k_1}$ does not exist, then the colors
$c(X_{h_1,k_1}\cup[i])$, $m_{h_1-1,k^{**}}\le i \le N-n$, are all in
$\{c(S_{h,k})\mid h<h_1,\mbox{ or } h=h_1\mbox{ and }k< k_1\}$, where $m_{h_1-1,k^{**}}=\max\{m_{h_1-1,k}\mid X_{h_1-1,k}\subset X_{h_1,k_1}\}$.
In this chain, the number of subsets of the same color is at most $m-1$. Hence, we have
\[|S_{h_1-1,k^{**}}|+1+(N-n-m_{h_1-1,k^{**}}+1)\le (m-1)\left(\sum_{i=0}^{h_1-1}\binom{n}{i}+k_1-1\right).\]
Again, simplifying it leads to $N< (m-1)(2^n-1)$, a contradiction.
So $\B_S$ must contain a rainbow $B_n$ under $c$.
\end{proof}

Two families $\F$ and $\G$ of subsets are said to be {\em incomparable} if for any $F\in \F$ and $G\in \G$,
neither $F\subset G$ nor $G\subset F$.
The following structure in the Boolean lattices will help us to determine $RR(C_m,A_n)$.

\begin{lemma}\label{INC}
For $n\ge 2$, there exist $n$ incomparable chains $\C_1,\ldots, \C_n$ in $\B_N$
with
\[
|\C_i|=
\left\{
\begin{array}{ll}
i,& \mbox{ if }N=n+2;\\
(m-1)(i-1)+1 ,&\mbox{ if } N=(m-1)(n-1)+2 \mbox{ and }m=n=2\,(\mbox{or } m\ge 3).
\end{array}
\right.
\]
\end{lemma}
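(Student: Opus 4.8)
The plan is to recast the statement in terms of intervals of $\B_N$. Observe first that two chains $\C,\C'$ in $\B_N$ are incomparable (as subfamilies) if and only if $\min\C\not\subseteq\max\C'$ and $\min\C'\not\subseteq\max\C$: indeed, if $S\in\C$, $T\in\C'$ and $S\subseteq T$, then $\min\C\subseteq S\subseteq T\subseteq\max\C'$, and the reverse implication is trivial since $\min\C$ and $\max\C'$ themselves lie in the two families. Hence it suffices to produce sets $A_1,\dots,A_n$ and $B_1,\dots,B_n$ with $A_i\subseteq B_i\subseteq[N]$, $|B_i|-|A_i|\ge |\C_i|-1$, and $A_i\not\subseteq B_j$ for all $i\ne j$; then taking $\C_i$ to be any chain of the prescribed length lying inside the interval $[A_i,B_i]$ works, because $A_i\subseteq\min\C_i\subseteq\max\C_j\subseteq B_j$ would contradict $A_i\not\subseteq B_j$. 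Writing $D_i:=[N]\setminus B_i$, the requirements become $A_i\cap D_i=\varnothing$, $|A_i|+|D_i|\le N-|\C_i|+1$, and $A_i\cap D_j\ne\varnothing$ whenever $i\ne j$; so the task is to build a family of ``boxes'' whose lower corners escape every other upper corner, with the prescribed slack.

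Next I would exhibit such boxes explicitly. For $N=n+2$ (so $|\C_i|=i$) the slack budget is $|A_i|+|D_i|\le n+3-i$, which is generous for the short chains and very tight for the long ones; accordingly, the short chains are placed ``high,'' with $A_i$ a large set that fails to sit inside anyone else's maximum for size reasons alone, while the few long chains must be threaded in with a careful choice of coordinates so that each of their (necessarily small) minimums still meets the (necessarily small) complement $D_j$ of every other box. A convenient way to organize the bookkeeping is by induction on $n$: peel off the size-$1$ chain as a singleton $\{z\}$, which forces the remaining $n-1$ chains to avoid $z$, hence to live on $n+1$ points, realizing the sizes $2,\dots,n$ while still leaving enough unused coordinates for the longest chain; the inductive hypothesis must be strengthened to also carry along an auxiliary set incomparable to all chains built so far, with which the next singleton is started. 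The escape conditions are then verified by a direct finite check. For $N=(m-1)(n-1)+2$ (so $|\C_i|=(m-1)(i-1)+1$) one carries out the analogous construction and ``thickens'' it: each unit of slack $|B_i|-|A_i|$ is blown up by the factor $m-1$ (equivalently, $m-2$ extra sets are inserted between consecutive members of each $\C_i$ using fresh coordinates). Since incomparability of chains depends only on minimums and maximums, all escape conditions survive, and the coordinate count works out to exactly $(m-1)(n-1)+2$; the degenerate case $m=n=2$ (sizes $1,2$ in $\B_3$) is immediate by hand.

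The hard part is exactly the interplay between the longest chain and everything else. A chain of size close to $N$ occupies almost all of a maximal chain, so its maximum $B_n$ is nearly all of $[N]$ and its complement $D_n$ has only one or two points; consequently every other box must escape $B_n$ through those one or two points, which imposes a rigid shared structure on the minimums $A_i$, and that structure must simultaneously be made consistent with all the remaining escape conditions and with the slack budget. This is also what makes the $m=2$ case genuinely require one coordinate beyond the naive value $(m-1)(n-1)+2=n+1$: with only $n+1$ points the constraints cannot all be satisfied (the matching lower bound for $RR(C_2,A_n)$, established elsewhere), while with $n+2$ points there is just enough room.
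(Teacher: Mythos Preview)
Your reformulation in terms of intervals is correct and useful: the problem does reduce to choosing endpoints $A_i\subseteq B_i$ with $A_i\not\subseteq B_j$ for $i\ne j$ and the right slack. But from that point on the proposal is an outline, not a proof, and the outline has real gaps.

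First, the induction is never carried out. You say ``the inductive hypothesis must be strengthened to also carry along an auxiliary set,'' but you do not say what that set is, how to produce a new one at the next step, or why one always exists. If one tries your suggestion literally (extend each old chain by appending the new coordinate at the top, promote the old auxiliary to the new size-$1$ chain), then the new $B_i$'s together with the old auxiliary typically cover \emph{all} of $[n+2]$, so there is no singleton left to serve as the next auxiliary; one has to allow larger auxiliary sets and argue carefully that a valid one survives, and that argument is absent. Second, the ``thickening'' for the case $N=(m-1)(n-1)+2$ is asserted to give the exact coordinate count but this is not checked: the natural blow-up (replace each coordinate by a block of $m-1$) lands you in $\B_{(m-1)(n+2)}$, which is far too large, and blowing up only the slack coordinates still costs one coordinate too many. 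Getting down to $(m-1)(n-1)+2$ requires a specific construction that you have not supplied.

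The paper's proof avoids both difficulties by going the other way: instead of peeling off the shortest chain, it \emph{adds the longest one}. Concretely, given $\C_1,\dots,\C_n$ in $\B_{N}$, it replaces each $\C_i$ by its family of complements in a slightly larger ground set and then adjoins a single new long chain $\C_{n+1}'=\{\{\text{new point}\}\cup[j]:0\le j\le N'\}$. Complementation preserves pairwise incomparability automatically, and the new long chain is incomparable to all the complemented chains because the new point separates them in one direction and the top one or two old points separate them in the other. This makes the verification a two-line check and gives the exact coordinate count in both regimes without any thickening step.
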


\begin{proof}
We prove both cases by induction on $n$ and constructing the families of chains recursively. The proofs are indeed similar.
When $N=n+2$, the base case is $\B_4$.
Take $\C_1=\{\{1,2\}\}$ and $\C_2=\{\{4\},\{1,4\}\}$ as the desired incomparable chains.
Suppose we already have $\C_1,\ldots, \C_n$ satisfying the conditions in $\B_{n+2}$.
Then define $\C_i'=\{[n+2]-X \mid X\in \C_i\}$, for $1\le i\le n$, and $\C_{n+1}'=\{\{n+3\}\cup [j]\mid 0\le j\le n\}$.
Clearly, for $1\le i<j\le n$, $\C'_i$ and $\C'_j$ are incomparable. For $W\in \C'_i$ and $Z\in C'_{n+1}$,
we have not only $n+3\in Z\setminus W$ but also $n+2\in  W\setminus Z$ if $i\le n-1$ and $n+1\in W\setminus Z$ if $i=n$.
Hence $\C_1'\ldots, \C_{n+1}'$ are incomparable.

When $N=(m-1)(n-1)+2$ and $n=2$, we take $\C_1=\{\{m\}\}$ and $\C_2=\{\{m+1\}\cup[j]\mid 0\le j\le m-1\}$.
For $m\ge 2$, we see that $\C_1$ and $\C_2$ are incomparable.
In particular, the case $m=n=2$ is verified.
For $m\ge 3$, suppose we already have $\C_1,\ldots, \C_n$ satisfying the conditions in $\B_{(m-1)(n-1)+2}$.
Then define $\C'_i=\{[(m-1)n+1]-X\mid X\in \C_i\}$, for $1\le i\le n$,
and $\C_{n+1}'=\{\{(m-1)n+2\}\cup [j]\mid 0\le j\le (m-1)n\}$.
As before, $\C'_i$ and $\C'_j$ are incomparable for $1\le i\le n$.
For any $W\in \C_i'$ with $i\le n$ and $Z\in \C_{n+1}'$,
note that $(m-1)n+1>(m-1)(n-1)+2$ if $m\ge 3$, so $(m-1)n+1\in W\setminus Z$.
Also, $(m-1)n+2\in Z\setminus W$. Therefore, $\C_1',\ldots, \C_{n+1}'$ are incomparable.
\end{proof}

\begin{theorem}\label{CMAN}
For the chains and antichains, we have
\[RR(C_m,A_n)=
\left\{
\begin{array}{ll}
n+2,& m=2\mbox{ and }n\ge 3 \\
(m-1)(n-1)+2,& m=n=2,\mbox{ or }m\ge 3\mbox{ and } n\ge 2\\

\end{array}
\right.
.\]
\end{theorem}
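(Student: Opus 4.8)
The plan is to prove matching upper and lower bounds; write $N_0$ for the claimed value of $RR(C_m,A_n)$, so $N_0=n+2$ in the first regime and $N_0=(m-1)(n-1)+2$ in the second. For the upper bound I would take in $\B_{N_0}$ the $n$ pairwise incomparable chains $\C_1,\dots,\C_n$ supplied by Lemma~\ref{INC}, with $|\C_i|=i$ in the first regime and $|\C_i|=(m-1)(i-1)+1$ in the second; in either case $\lceil |\C_i|/(m-1)\rceil=i$. Given a coloring $c$ of $\B_{N_0}$ with no monochromatic $C_m$, no color is used on $m$ sets of the chain $\C_i$ (that would be a monochromatic $C_m$), so $\C_i$ carries at least $i$ distinct colors. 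Choose $a_1\in\C_1,\dots,a_n\in\C_n$ greedily: once $a_1,\dots,a_{i-1}$ are fixed, $\C_i$ offers at least $i$ colors while only $i-1$ are excluded, so some $a_i\in\C_i$ satisfies $c(a_i)\notin\{c(a_1),\dots,c(a_{i-1})\}$. As the $\C_i$ are pairwise incomparable, $\{a_1,\dots,a_n\}$ is an antichain with pairwise distinct colors, a rainbow $A_n$; hence $RR(C_m,A_n)\le N_0$.

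For the lower bound I would color $\B_{N_0-1}$ with neither a monochromatic $C_m$ nor a rainbow $A_n$. The guiding point is that an antichain of size at least $2$ can contain neither $\varnothing$ nor the ground set, since these are comparable to everything; hence it suffices that no color class have height $\ge m$ and that the \emph{internal} levels $1,\dots,N_0-2$ be colored with fewer than $n$ colors in a way no antichain defeats. When $m\ge 3$ (this also covers $m=n=2$), set $N:=N_0-1=(m-1)(n-1)+1$, give $\varnothing$ and $[N]$ two private colors, and split the $N-1=(m-1)(n-1)$ internal levels of $\B_N$ into $n-1$ consecutive blocks of $m-1$ levels, one color per block: every color class then has height at most $m-1$, and every antichain is either a singleton or confined to the $n-1$ internal colors. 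When $m=2$, $n\ge 3$, the constraint height $\le m-1=1$ forbids grouping levels, so instead color $X$ by $|X|$ in $\B_{n+1}$; each level is an antichain, so there is no monochromatic $C_2$, while a rainbow $A_n$ would be an $n$-set antichain with $n$ distinct sizes, which---avoiding $\varnothing$ and $[n+1]$---must realize exactly the sizes $1,\dots,n$. This is impossible for $n\ge 3$: if $X_1,\dots,X_n$ is an antichain with $|X_i|=i$, then $X_1=\{a\}$ forces $X_n=[n+1]\setminus\{a\}$, whereas $X_1\not\subseteq X_2$ gives $a\notin X_2$ and $X_2\not\subseteq X_n$ gives $a\in X_2$, a contradiction; so $\B_{n+1}$ has no rainbow $A_n$ under this coloring.

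The greedy selection, the check that a block of $m-1$ consecutive levels has height exactly $m-1$, and the boundary case $m=n=2$ (where the internal part of $\B_2$ is the single level $1$) are all routine. Besides quoting Lemma~\ref{INC}, the two points that do the real work---and where I expect the main obstacle---are recognizing that $\varnothing$ and the ground set may be colored freely because they are invisible to antichains of size at least $2$ (this is what produces the ``$+2$'' in $N_0$), and, for $m=2$, the structural fact that no antichain of $\B_{n+1}$ realizes every size in $\{1,\dots,n\}$.
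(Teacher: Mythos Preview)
Your proposal is correct and follows essentially the same approach as the paper's proof: both use Lemma~\ref{INC} together with a greedy color-selection along the pairwise incomparable chains for the upper bound, and the same two level-block colorings (with $\varnothing$ and the ground set receiving private colors) for the lower bound. Your treatment of the $m=2$ lower bound, deriving the contradiction from $X_1$, $X_2$, and $X_n$ specifically, is a slightly crisper version of the paper's argument that any intermediate $X_i$ must either contain $X_1$ or lie inside $X_n$.
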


\begin{proof}
The constructions in Lemma~\ref{INC} imply the upper bounds $RR(C_2,A_n)\le n+2$ for $n\ge 3$, and $RR(C_m,A_n)\le (m-1)(n-1)+2$ for $m=n=2$, or for $m\ge 3$.
Let $\C_1,\ldots, \C_n$ be the chains constructed in Lemma~\ref{INC}.
In both cases, there exist at least $i$ colors on the subsets in $\C_i$,
otherwise it contains a monochromatic $C_m$.
However, if there are $i$ colors on $\C_i$,
then we can pick a set from each chain in order so that all sets are of distinct colors.
The $n$ sets will form a rainbow $A_n$ since all the chains are incomparable.

To establish the lower bound $RR(C_2,A_n)>n+1$ for $n\ge 3$, we assign color $i+1$ to all subsets in $\binom{[n+1]}{i}$ for $0\le i\le n+1$.
Clearly, there is no monochromatic $C_2$ under this coloring. If it contains rainbow $A_n$, then neither $\varnothing$ nor
$[n+1]$ can be in the rainbow antichain. So the rainbow $A_n$ must consists of one $X_i$ in each $\binom{[n+1]}{i}$ for $1\le i\le n$.
However, if $X_1\not\subset X_n$, then any other $X_i$ either contains $X_1$ or is contained in the complementary set of $X_1$, namely $X_n$.
As a consequence, the rainbow $A_n$ does not exist.

Next, we show $RR(C_m,A_n)>(m-1)(n-1)+1$ for $m=n=2$, or for $m\ge 3$ and $n\ge 2$.
Let $N=(m-1)(n-1)+1$. Assign color $1+\lceil \frac{|X|}{m-1}\rceil$ to each $X\subseteq [N]$.
This gives an $(n+1)$-coloring on $\B_N$ such that no color appears on $m$ distinct sizes of subsets.
So, there is no monochromatic $C_m$ under the coloring. Meanwhile, color 1 and $n+1$ appear only on
$\varnothing$ and $[N]$, respectively. Any family contains subsets of $n$ distinct colors must
contain at least one of the two sets, and thus cannot be an antichain $A_n$.
\end{proof}

Switching the antichain and chain in Theorem~\ref{CMAN} enhances the difficulty. We only solve the first step.

\begin{theorem}\label{A2CN} For $n\ge 2$,
$RR(A_2,C_n)=n$.
\end{theorem}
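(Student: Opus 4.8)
My plan is to establish the two inequalities $RR(A_2,C_n)\ge n$ and $RR(A_2,C_n)\le n$ separately, the first by an explicit construction and the second by induction on $n$.

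For the lower bound I would color $\B_{n-1}$ by a symmetric chain decomposition~\cite{BEK,GK}, assigning to each symmetric chain its own color. Every color class is a chain, so there is no monochromatic $A_2$. The symmetric chain containing $\varnothing$ runs from level $0$ to level $n-1$, hence also contains $[n-1]$; consequently every maximal chain of $\B_{n-1}$---which has exactly $n$ sets and passes through both $\varnothing$ and $[n-1]$---carries at most $n-1$ colors. Since every chain of $\B_{n-1}$ lies inside a maximal chain, no chain carries $n$ colors, so there is no rainbow $C_n$; hence $RR(A_2,C_n)\ge n$.

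For the upper bound, induct on $n$ (the cases $n\le 2$ being direct; for $n=2$, if $c(\varnothing)=c(\{1\})=c(\{2\})=c([2])$ then $\{1\},\{2\}$ form a monochromatic $A_2$, and otherwise some comparable pair gets distinct colors). Assume $n\ge 3$ and let $c$ color $\B_n$ with no monochromatic $A_2$, so every color class is a chain; I want a rainbow $C_n$. The key lever is that $[n]$ lies above every set, so if a rainbow $C_{n-1}$ uses none of the single color $c([n])$, appending $[n]$ produces a rainbow $C_n$. Thus it suffices to find such a rainbow $C_{n-1}$ inside the copy $\mathcal L=\{X:X\subseteq[n-1]\}\cong\B_{n-1}$, which inherits the no-monochromatic-$A_2$ property. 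If $c([n])$ does not occur on $\mathcal L$ this is immediate from the induction hypothesis; if it occurs along a chain $K\subseteq\mathcal L$, the natural move is to locate a copy of $\B_{n-2}$ inside $\mathcal L$ disjoint from $K$ (possible whenever $\varnothing\notin K$ or $[n-1]\notin K$, by deleting the coordinate of a smallest or largest element of $K$), apply the induction hypothesis there, and raise the resulting rainbow $C_{n-2}$ to a rainbow $C_{n-1}$ by adjoining an appropriate top set, pushing any color coincidence into an intermediate gap of the chain if necessary.

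The step I expect to be the real obstacle is the degenerate case $\varnothing,[n-1]\in K$, i.e.\ $c(\varnothing)=c([n-1])=c([n])$, in which (since color classes are chains) the class of $c([n])$ is a single chain running from $\varnothing$ through $[n-1]$ to $[n]$; then $\mathcal L\setminus K$ has height $\le n-2$ and contains no $C_{n-1}$ whatsoever, so the reduction above collapses. I would handle it by working in a copy of $\B_{n-1}$ meeting this bad color class in a single element: taking $a$ in the smallest nonempty set of the class (so every nonempty set of the class contains $a$), the copy $[\varnothing,[n]\setminus\{a\}]$ meets the class only at $\varnothing$, and a rainbow $C_{n-1}$ there (given by induction) plus the top set $[n]$ gives a rainbow $C_n$; in the one residual situation that survives this, I would use the dual construction at the top of $\B_n$, adjoining $\varnothing$ at the bottom of a rainbow $C_{n-1}$ taken inside $[\{b\},[n]]$ for $b$ outside the largest proper set of the bad class. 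Checking that these alternatives jointly always succeed---and carrying out the bookkeeping for the ``push a coincidence into a gap'' steps---is where the argument will need the most care, and it is there that the constraint that incomparable sets receive distinct colors does the essential work.
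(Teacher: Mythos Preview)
Your lower bound via symmetric chain decomposition is correct, though more elaborate than needed; the paper simply gives $\varnothing$ and $[N]$ the same color and every other set its own color, which already forces any maximal chain (hence any chain) to repeat a color.

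Your upper bound has the right inductive skeleton, but the execution has two genuine gaps.

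First, in your non-degenerate case you drop to a copy of $\B_{n-2}$ and then try to ``raise the resulting rainbow $C_{n-2}$ to a rainbow $C_{n-1}$ by adjoining an appropriate top set, pushing any color coincidence into an intermediate gap.'' That step is not justified: the set you adjoin may repeat a color already in the chain, and there is no general mechanism to ``push'' such a collision away. You created this difficulty by fixing $\mathcal L=2^{[n-1]}$ in advance. The paper instead notes that when $c(\varnothing)\ne c([n])$, the entire color class of $c([n])$ in $\B_n$ is a chain of nonempty sets sharing a common element; relabelling so that this element is $n$, the copy $2^{[n-1]}$ avoids that color entirely, and induction plus appending $[n]$ finishes at once---no detour through $\B_{n-2}$ is needed.

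Second, in your degenerate case $c(\varnothing)=c([n-1])=c([n])$ you try $[\varnothing,[n]\setminus\{a\}]$ and then its dual $[\{b\},[n]]$, hoping that at least one of the two rainbow $C_{n-1}$'s produced by induction avoids the bad endpoint; but you never verify that these alternatives jointly succeed, and nothing forces them to. The paper's device here is a \emph{recoloring trick}: working in the sub-lattice $\B_{n-1}$ (chosen so that $\varnothing$ is its only element carrying the bad color), temporarily recolor $\varnothing$ to match the top element $[n-1]$. This is still $A_2$-free, so induction yields a rainbow $C_{n-1}$ under the new coloring; and since any $(n-1)$-element chain in $\B_{n-1}$ must contain $\varnothing$ or $[n-1]$, one may swap $\varnothing$ out for $[n-1]$ if needed. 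The resulting chain avoids $\varnothing$, is rainbow under the \emph{original} coloring, and omits the bad color, so prepending $\varnothing$ yields the rainbow $C_n$. This recolor-then-swap idea is precisely the ingredient your sketch is missing.
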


\begin{proof}
For $N<n$, we color the subsets $\varnothing$ and $[N]$ by the same color, and each of the remaining subsets by one color.
Clearly, there does not exist monochromatic $A_2$, and every longest rainbow chain has size $N\le n-1$.

To show $RR(A_2,C_n)\le n$, we begin with $n=2$.
For $\B_2$, if there is a coloring on $\B_2$ so that no monochromatic $A_2$ exists under the coloring,
then one of $\{1\}$ and $\{2\}$ together with $\varnothing$ form a rainbow $C_2$.
So $RR(A_2,C_2)\le 2$.
Assume there exists some $n$ for which $RR(A_2,C_n)\ge n+1$.
Let $n^*$ be the minimum $n\ge 3$ satisfying the inequality.
We will show such $n^*$ cannot exist.
Let $c$ be a coloring on $\B_{n^*}$ such that no monochromatic $A_2$ under $c$.
Suppose $c(\varnothing)\neq c([n^*])$. May assume $c(\varnothing)=1$ and $ c([n^*])=2$.
Note that the subsets of the same color must form a chain.
We may assume that all subsets of color 2 contain the element $n^*$.
By the minimality of $n^*$, the coloring $c$ restricted to $\B_{n^*-1}$ contains a rainbow $C_{n^*-1}$.
Since every subsets in $\B_{n^*-1}$ are not of color 2, the rainbow chain we just obtained together with $[n^*]$
form a rainbow chain $C_n$. This is a contradiction.
Else suppose $c(\varnothing)= c([n^*])=1$. As before, subsets of the same color must form a chain.
So we may assume if there are other subsets, in addition to $\varnothing$ and  $[n^*]$, are of color 1, then
all of them contain $n^*$.
Assume $c([n^*-1])=2$. We observe all subsets in $\B_{n^*-1}$, and replace the color of $\varnothing$ by $2$.
Under the new coloring, $\B_{n^*-1}$ contains a rainbow $C_{n^*-1}$ by the minimality of $n^*$.
This rainbow chain contains either $\varnothing$ or  $[n^*-1]$.
Indeed, if it contains $\varnothing$ but not  $[n^*-1]$, then we replace $\varnothing$ by $[n^*-1]$
and still have a rainbow $C_{n^*-1}$.
The chain together with $\varnothing$ form a rainbow $C_{n^*}$ under the original coloring $c$.
This also makes a contradiction. Hence  $n^*$ does not exist, and  $RR(A_2,C_n)\le n$ holds for all $n\ge 2$.
\end{proof}

\section{Bounds for $RR(P,Q)$}

At the beginning of this section, we give bounds for the Boolean rainbow Ramsey number of general posets.
A lower bound for $RR(P,Q)$ is $ R_{|Q|-1}(P)$. Since if $n=R_{|Q|-1}(P)-1$, there exists a $(|Q|-1)$-coloring on $
\B_n$ containing no monochromatic $P$ and also no rainbow $Q$ because of the insufficiency of colors.
Besides, some lower bounds can be obtained in terms of the parameters of the posets.
The {\em height} and {\em width} of a poset $P$, $h(P)$ and $w(P)$, are the sizes of a maximum chain and antichain of $P$, respectively.
The following lower bounds for $RR(P,Q)$ are the consequences of the results in the previous section:

\begin{fact}\label{LWB}
For any posets $P$ and $Q$,
we have
\begin{description}
\item{(1)} $RR(P,Q)\ge (h(P)-1)(h(Q)-1)$;
\item{(2)} $RR(P,Q)\ge N_{w(P),w(Q)}$;
\item{(3)} $RR(P,Q)\ge \left\{
\begin{array}{ll}
w(Q)+2,& h(P)=2\mbox{ and }w(Q)\ge 3 \\
(h(P)-1)(w(Q)-1)+2,& h(P)=w(Q)=2\mbox{, or}\\
& h(P)\ge 3\mbox{ and } w(Q)\ge 2\\

\end{array}
\right.$
\end{description}
\end{fact}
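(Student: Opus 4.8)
The plan is to deduce all three bounds from the exact values already established in Section~2, by the single observation that strong-subposet containment is transitive and respects both the monochromatic and the rainbow conditions. The point is that, for any poset $P$, a maximum chain of $P$ (of size $h(P)$) is a copy of $C_{h(P)}$ as a strong subposet of $P$, and a maximum antichain of $P$ (of size $w(P)$) is a copy of $A_{w(P)}$; hence whenever a family $\F\subseteq\B_N$ contains $P$ as a strong subposet, it also contains $C_{h(P)}$ and $A_{w(P)}$.

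First I would record the inheritance of colorings. If $c$ is a coloring of $\B_N$ and $\F\subseteq\B_N$ is a monochromatic family containing $P$, then the sub-copy of $C_{h(P)}$ lying inside the copy of $P$ is again monochromatic; so a coloring with no monochromatic $C_{h(P)}$ has no monochromatic $P$. Dually, if $\F$ is a rainbow copy of $Q$ (all $|Q|$ subsets distinctly colored), then the $h(Q)$ subsets of a sub-copy of $C_{h(Q)}$, and likewise the $w(Q)$ subsets of a sub-copy of $A_{w(Q)}$, are among these distinctly colored subsets and hence themselves rainbow; so a coloring with no rainbow $C_{h(Q)}$ (resp.\ no rainbow $A_{w(Q)}$) has no rainbow $Q$. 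The only care needed here is that the relevant embedding is injective on the subposet, so that ``pairwise distinct colors'' restricts correctly.

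Given this, (1) follows by taking $N=(h(P)-1)(h(Q)-1)-1$ together with the level-grouping coloring from the proof of Proposition~\ref{CMCN}, which has neither a monochromatic $C_{h(P)}$ nor a rainbow $C_{h(Q)}$, hence neither a monochromatic $P$ nor a rainbow $Q$; thus $RR(P,Q)\ge(h(P)-1)(h(Q)-1)$. Part (2) is identical with $N=N_{w(P),w(Q)}-1$ and the symmetric-chain-decomposition coloring from the proof of Proposition~\ref{AMAN}. Part (3) is again the same, now with $C_{h(P)}$ in the monochromatic role and $A_{w(Q)}$ in the rainbow role: take $N=RR(C_{h(P)},A_{w(Q)})-1$ and the coloring constructed in the proof of Theorem~\ref{CMAN}, and observe that the case split in (3) is exactly that of Theorem~\ref{CMAN} with $m=h(P)$ and $n=w(Q)$.

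Since the whole argument is monotonicity plus a quotation of Section~2, there is no genuine obstacle; the only bookkeeping is to note that the degenerate cases ($h(P)\le1$, $w(P)\le1$, or $w(Q)\le1$) are either not claimed in (3) or give a vacuous bound in (1)--(2), and to state the subposet-inheritance observation cleanly enough that it can be invoked in all three parts at once.
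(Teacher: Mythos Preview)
Your proposal is correct and takes essentially the same approach as the paper: the paper's proof is a one-line citation of Proposition~\ref{CMCN}, Proposition~\ref{AMAN}, and Theorem~\ref{CMAN}, and your write-up simply makes explicit the monotonicity step (that $RR(P',Q')\le RR(P,Q)$ whenever $P'$ and $Q'$ are strong subposets of $P$ and $Q$) that the paper leaves implicit. Nothing further is needed.
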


\begin{proof}
These follow from Proposition~\ref{CMCN}, Proposition~\ref{AMAN}, and Theorem~\ref{CMAN}.
\end{proof}

A lower bound of $RR(P,Q)$ in terms of the height and the width of the posets can be obtained analogously if one can determine $RR(A_m,C_n)$.
Although we only determine $RR(A_2, C_n)$, we present an upper bound for $RR(A_m,C_n)$ here.

\begin{proposition}\label{AMCN}
There exists some constant $C$ such that
$ RR(A_m,C_n)\le (m-1)n+C$.
\end{proposition}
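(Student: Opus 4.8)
The plan is to prove the bound $RR(A_m,C_n)\le (m-1)n+C$ by adapting the ``principal chain'' technique from the proof of Theorem~\ref{B2B2}(2), but now reading it in the contrapositive direction suited to the antichain. Set $N=(m-1)n+C$ for a constant $C$ to be chosen. Fix a coloring $c$ on $\B_N$ and suppose there is no monochromatic $A_m$; I want to produce a rainbow $C_n$. Since every color class has no $m$ pairwise incomparable sets, by Mirsky's theorem (the dual of Dilworth) each color class can be covered by at most $m-1$ chains. Equivalently, within each color class there is a rank function with at most $m-1$ values, so along any maximal chain $\varnothing=Z_0\subset Z_1\subset\cdots\subset Z_N=[N]$ of $\B_N$, each color appears at most $m-1$ times. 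This already shows that such a chain meets at least $\lceil (N+1)/(m-1)\rceil$ distinct colors, which is roughly $n+C/(m-1)$ — far more than $n$. So a naive maximal chain gives a rainbow chain of length $n$ immediately, and in fact $C=0$ almost works.

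The subtlety — and the reason a positive constant $C$ is needed rather than just quoting the chain of length $N+1$ — is that the problem statement presumably wants the cleanest possible constant, and more importantly that the argument above actually already gives $RR(A_m,C_n)\le (m-1)(n-1)+1$ directly: along $\varnothing\subset[1]\subset\cdots\subset[(m-1)(n-1)+1]$, there are $(m-1)(n-1)+2$ sets, each color used at most $m-1$ times, hence at least $\lceil((m-1)(n-1)+2)/(m-1)\rceil\ge n$ colors appear, giving a rainbow $C_n$. Thus one may simply take $C$ so that $(m-1)n+C\ge (m-1)(n-1)+1$, i.e. $C=1-(m-1)$ works, or even more crudely any $C\ge 0$. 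So the first step in the write-up is to state Mirsky's theorem for the color classes, the second step is to pick the canonical maximal chain in $\B_N$, the third step is the pigeonhole count on colors along that chain, and the conclusion follows.

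The main ``obstacle'' is therefore not mathematical depth but bookkeeping about the constant: one should verify that the same pigeonhole works for \emph{every} maximal chain, not just $\varnothing\subset[1]\subset\cdots$, since Mirsky gives the bound on chains intrinsically. I would phrase it as: no monochromatic $A_m$ $\Rightarrow$ each color class has height-parameter (width) at most $m-1$ $\Rightarrow$ each color class is a union of at most $m-1$ antichains by Mirsky $\Rightarrow$ any chain of $\B_N$ of length $\ell$ hits at least $\lceil \ell/(m-1)\rceil$ colors $\Rightarrow$ taking $\ell=N+1$ with $N=(m-1)n+C$, we get at least $n+\lceil (C+1)/(m-1)\rceil\ge n$ colors, hence a rainbow $C_n$. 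The only genuine check is the direction of Mirsky (a poset of width $w$ decomposes into $w$ antichains — here the color class has width $\le m-1$ because it contains no $m$-element antichain, so it decomposes into $\le m-1$ antichains, and a chain meets each antichain in at most one element). I would present the argument for general constant $C\ge 0$ and remark that this in fact recovers the sharper-looking bound $(m-1)(n-1)+1$, consistent with the exact values of $RR(C_m,A_n)$ computed in Theorem~\ref{CMAN}.
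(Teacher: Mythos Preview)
Your argument contains a fundamental confusion between width and height, and correspondingly between Dilworth's and Mirsky's theorems. The hypothesis ``no monochromatic $A_m$'' says precisely that every color class has \emph{width} at most $m-1$; it places no bound whatsoever on the \emph{height} of a color class. Dilworth's theorem then decomposes each color class into at most $m-1$ \emph{chains}; the decomposition into at most $m-1$ \emph{antichains} that you invoke would require height $\le m-1$, which you do not have. Consequently the key claim ``along any maximal chain of $\B_N$, each color appears at most $m-1$ times'' is simply false: a single color class could itself be a long chain.

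Concretely, color the chain $\varnothing\subset[1]\subset\cdots\subset[N]$ entirely with color~$1$ and give every other subset of $[N]$ its own distinct color. Every color class is a chain (width~$1$), so there is no monochromatic $A_2$, let alone $A_m$; yet your chosen maximal chain sees only one color, and your pigeonhole count of $\lceil (N+1)/(m-1)\rceil$ colors along it is vacuous. (There is a rainbow $C_n$ elsewhere in this example, but your argument does not locate it.) Note also that the direct claim in your second paragraph, ``each color used at most $m-1$ times'' along the canonical chain, would be valid if we were forbidding a monochromatic \emph{chain} $C_m$---that is exactly the easy computation of $RR(C_m,C_n)$ in Proposition~\ref{CMCN}---but forbidding an \emph{antichain} $A_m$ gives no such control.

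The paper's proof uses Dilworth in the correct direction, via induction on $n$ in the style of Theorem~\ref{A2CN}. With no monochromatic $A_m$, the color class of $[N]$ (restricted to nonempty sets) decomposes into at most $m-1$ chains; each such chain has a common element, and deleting these $\le m-1$ ground elements produces a sub-Boolean-lattice on which that color no longer appears on nonempty sets. Induction yields a rainbow $C_{n-1}$ there avoiding that color, which one extends by $[N]$ to a rainbow $C_n$. This gives the recursion $RR(A_m,C_{n+1})\le RR(A_m,C_n)+(m-1)$, hence the bound $(m-1)n+C$.
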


\begin{proof}
We prove by induction on $n$, using an idea similar to Theorem~\ref{A2CN}.
Suppose we already have $RR(A_m, C_{n^*})=k$ for some $n^*$.
Consider any color $c$ on  $\B_{k+m-1}$.
Assume that there is no monochromatic $A_m$ under $c$.
Then we will show there is rainbow $C_{n^*+1}$.
Let $c([k+m-1])=1$ and $\F_1$ the family of nonempty subsets of $[k+m-1]$ colored by 1.
Since the maximum size of an antichain in $\F_1$ is at most $m-1$, by
the well-known Dilworth's theorem~\cite{DIL}, we can partition $\F_1$ into at most $m-1$ disjoint chains, say $\C_1,\C_2,\ldots,\C_\ell$ and $\ell\le m-1$.
For each $\C_i$, let $x_i$ be a common element in each subset in $\C_i$.
Let $S=[k+m-1]\setminus\{x_1,\ldots,x_\ell\}$. Observe that $|S|\ge k$ and the  color of each subset of $S$, except for $\varnothing$, is not $1$.
As before, we consider a new coloring $c'$ on the subsets of $S$ with $c'(X)=c(X)$ for all nonempty $X\subseteq S$ and $c'(\varnothing)=c(S)$.
Because $|S|\ge k$, we can find a rainbow chain $\C$ of subsets of $S$ isomorphic to $C_{n^*}$. If $\C$ does not contain $\varnothing$, then the subsets in $\C\cup \{[k+m-1]\}$ form a rainbow $C_{n^*+1}$ under the coloring $c$.
If $\C$ contains $\varnothing$, then let $\C'=\C\setminus \{\varnothing\}\cup\{S\}$. The subsets in $\C'$ together with $[k+m-1]$ form a rainbow $C_{n*+1}$ under $c$.
Conclusively, for $n\ge n^*$, $RR(A_m,C_{n+1})-RR(A_m,C_{n})\le m-1$  and $RR(A_m,C_n)\le n(m-1)+k-n^*(m-1)$.
\end{proof}

An upper bound for general $P$ and $Q$ can be deduced from the definition of the 2-dimension.
\begin{fact}\label{UPB}
For any poset $P$ and $Q$,
we have
\[RR(P,Q)\le RR(B_{\dim_2(P)},B_{\dim_2(Q)}).\]
\end{fact}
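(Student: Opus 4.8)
The plan is to unwind the definition of the $2$-dimension and observe that containment of posets is transitive. Recall that $\dim_2(P)$ is the least $N$ for which $\B_N$ contains $P$ as a strong subposet; in particular, $\B_{\dim_2(P)}$ itself contains a copy of $P$, and likewise $\B_{\dim_2(Q)}$ contains a copy of $Q$. The key point is that these two Boolean lattices, being the ``target'' posets in the definition of $RR$, play exactly the roles of $P$ and $Q$ in an instance of the Boolean rainbow Ramsey problem whose value we are comparing against. So the strategy is: show that any coloring certifying a monochromatic $\B_{\dim_2(P)}$ also certifies a monochromatic $P$, and symmetrically for rainbow copies of $\B_{\dim_2(Q)}$ and $Q$.

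Concretely, set $N = RR(B_{\dim_2(P)}, B_{\dim_2(Q)})$ and let $c$ be an arbitrary coloring of $\B_N$. By definition of $N$, either $\B_N$ contains a monochromatic copy of $B_{\dim_2(P)}$ under $c$, or it contains a rainbow copy of $B_{\dim_2(Q)}$ under $c$. In the first case, let $\G \subseteq \B_N$ be a monochromatic subfamily isomorphic to $\B_{\dim_2(P)}$. Since $\B_{\dim_2(P)}$ contains $P$ as a strong subposet, and strong containment is transitive under composition of order-preserving, order-reflecting injections, $\G$ contains a subfamily isomorphic to $P$; this subfamily inherits the single color, so $\B_N$ contains a monochromatic $P$. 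In the second case, let $\G$ be a rainbow subfamily isomorphic to $\B_{\dim_2(Q)}$; the same transitivity argument gives a subfamily isomorphic to $Q$, and any sub-subfamily of a rainbow family is rainbow, so $\B_N$ contains a rainbow $Q$. In either case the coloring $c$ witnesses a monochromatic $P$ or a rainbow $Q$, and since $c$ was arbitrary this shows $RR(P,Q) \le N$, as claimed.

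The only thing that needs a word of care is the transitivity of strong containment: if $\phi\colon Q \to \B_{\dim_2(Q)}$ realizes $Q$ inside the abstract Boolean poset and $\psi\colon \B_{\dim_2(Q)} \to \B_N$ is the isomorphism onto $\G$, then $\psi\circ\phi$ is an injection from $Q$ into $\B_N$ preserving and reflecting the inclusion relation, i.e.\ a strong embedding; and restricting a rainbow family to a subfamily keeps all colors distinct. I do not expect any real obstacle here — the statement is essentially a formal consequence of the definitions, and the proof is a two-line reduction. If anything, the subtlety worth flagging is simply that one must use the \emph{strong} (not weak) notion of containment consistently, since $\dim_2$ and $RR$ in this paper are defined with strong subposets; but that matches the conventions fixed in the introduction, so no additional work is required.
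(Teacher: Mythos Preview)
Your proof is correct and is precisely the argument the paper has in mind: the paper states the fact without a detailed proof, noting only that it ``can be deduced from the definition of the 2-dimension,'' which is exactly the transitivity-of-containment reduction you spell out. There is nothing to add.
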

These bounds are quite loose, but our results in Section 2 show that the inequalities in Fact~\ref{LWB} and~\ref{UPB} are tight for some posets.

The inequality in Fact~\ref{UPB} stimulates us to concentrate on evaluating the Boolean rainbow Ramsey number of the Boolean posets.
In the last paragraph, we see that the Boolean Ramsey number $R_{2^n-1}(B_m)$ is a lower bound for $RR(B_m,B_n)$.
Here we give an upper  bound for $RR(B_m,B_n)$ also using the Boolean Ramsey numbers.

\begin{theorem}\label{BMBN}
For the Boolean posets $B_m$ and $B_n$, we have
\[RR(B_m,B_n)\le \sum_{i=1}^{2^n-1}R_i(B_m)\]
\end{theorem}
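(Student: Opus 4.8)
The plan is to prove the bound by induction on $n$, peeling off one level of the desired rainbow Boolean poset $B_n$ at a time, exactly as the principal-chain arguments in Theorems~\ref{B2B2}(2) and~\ref{CMBN} peel off a chain at a time. Set $N=\sum_{i=1}^{2^n-1}R_i(B_m)$ and let $c$ be any coloring of $\B_N$; assume there is no monochromatic $B_m$. Our goal is to exhibit a rainbow $B_n$. The base case $n=1$ asks that $\B_{R_1(B_m)}$ contains either a monochromatic $B_m$ or a rainbow $B_1$; but $R_1(B_m)=RR(B_m,B_1)=m$ by Theorem~\ref{B2B2}(1), and a rainbow $B_1$ is simply two distinctly colored sets, so the absence of a monochromatic $B_m$ forces at least two colors and we are done. (One should double-check that $R_1(B_m)$, the $1$-color Boolean Ramsey number, literally means "$\B_n$ always contains $B_m$", i.e. $R_1(B_m)=\dim_2(B_m)=m$, which matches.)

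For the inductive step, suppose the bound holds for $n-1$, so that $N'=\sum_{i=1}^{2^{n-1}-1}R_i(B_m)$ levels already guarantee a rainbow $B_{n-1}$ when there is no monochromatic $B_m$. The idea is: inside $\B_N$ with $N=N'+\sum_{i=2^{n-1}}^{2^n-1}R_i(B_m)$, think of the ground set as a disjoint union; use the first $N'$ coordinates to build a rainbow $B_{n-1}$ living on sets $S_1\subset\cdots$ indexed by subsets of $[n-1]$, and then try to extend each of the $2^{n-1}$ sets $S_T$ ($T\subseteq[n-1]$) upward, adding a new "top" element $x_n$ together with a prefix of the remaining $\sum_{i\ge 2^{n-1}} R_i(B_m)$ coordinates, so that the $2^{n-1}$ new sets $S_T\cup\{x_n\}\cup[\text{prefix}]$ receive $2^{n-1}$ fresh colors, distinct from each other and from the $2^{n-1}$ colors already used. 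Processing these in a suitable order, when we come to choose the $j$-th new set we must avoid at most $2^{n-1}+j-1 \le 2^n-1$ forbidden colors; if along some maximal chain in the remaining coordinates, starting from the relevant $S_T\cup\{x_n\}$, we ever fail, then that chain together with its principal chain below is a rainbow chain whose color count is bounded — but a rainbow chain of length $R_i(B_m)$ in the worst case still only gives a chain, not a Boolean poset, so this is where I expect the real difficulty: a rainbow \emph{chain} is much weaker than a monochromatic $B_m$, so the counting has to be arranged so that failure produces instead a set of $R_i(B_m)$ sets on which $c$ uses few colors, hence a monochromatic $B_m$ by definition of $R_i(B_m)$.

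Concretely, the mechanism I would use is the same as in Theorem~\ref{CMBN} but with "$m-1$" replaced by an $R_i(B_m)$: when choosing the $j$-th new top set, walk up a full chain of length $R_j(B_m)$ (in unused coordinates) above $S_T\cup\{x_n\}$; if no set on this chain gets a new color, then all $R_j(B_m)$ of them, plus the $\le 2^n-1$ earlier distinguished sets, use at most $2^n-1$ colors total, and in particular the $R_j(B_m)$ sets on the chain — being linearly ordered and hence a copy of $B_{h}$-free configuration is not enough — here is the precise point: we instead restrict $c$ to the sub-Boolean-lattice $\B_{R_j(B_m)}$ formed by $S_T\cup\{x_n\}$ together with those $R_j(B_m)$ free coordinates, conclude by definition of $R_j(B_m)$ that it contains a monochromatic $B_m$, contradiction. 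Summing the chain-lengths used, $2^{n-1}$ of them of sizes $R_{2^{n-1}}(B_m),\ldots,R_{2^n-1}(B_m)$ (the $j$-th extension facing $\le 2^{n-1}+j-1$ forbidden colors, $j=1,\ldots,2^{n-1}$, so needing a fresh color among $R_{2^{n-1}-1+j}(B_m)+1$ options), together with the $N'$ coordinates for the base case, gives total $N'+\sum_{j=1}^{2^{n-1}}R_{2^{n-1}-1+j}(B_m)=\sum_{i=1}^{2^n-1}R_i(B_m)=N$, closing the induction.

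The step I expect to be the main obstacle is making the bookkeeping of "forbidden colors" versus "available coordinates" line up exactly with the claimed sum $\sum_{i=1}^{2^n-1}R_i(B_m)$, and in particular ensuring the extensions are done in an order (a linear extension of $B_{n-1}$, processing $T\subseteq[n-1]$ by increasing size) for which each new top set $S_T\cup\{x_n\}\cup(\text{prefix})$ sits above $S_{T'}$ for all $T'\subsetneq T$ — this requires, as in Theorem~\ref{B2B2}(2), taking the prefix length to dominate those of all predecessors, and verifying that the monotonicity $R_i(B_m)\ge R_{i-1}(B_m)$ makes the greedy choice consume no more than the budgeted $R_{2^{n-1}-1+j}(B_m)$ coordinates at step $j$. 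Monotonicity of $R_i(B_m)$ in $i$ is immediate (more colors can only help the colorer), so I believe the accounting works, but it is the part that must be written with care.
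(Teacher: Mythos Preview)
Your core mechanism is exactly the paper's: at the $i$-th step, examine a sub-Boolean-lattice isomorphic to $\B_{R_i(B_m)}$; if no fresh color appears there, that sub-lattice is $i$-colored without a monochromatic $B_m$, contradicting the definition of $R_i(B_m)$. The paper carries this out directly rather than by induction on $n$: partition $[N]$ into disjoint blocks $X_1,\dots,X_{2^n-1}$ with $|X_i|=R_i(B_m)$, list the nonempty subsets $I_1,\dots,I_{2^n-1}$ of $[n]$ in an order compatible with inclusion, set $Y_0=\varnothing$, and for each $i$ pick $Y_i$ with a fresh color from the interval $[\,\bigcup_{I_j\subsetneq I_i}Y_j,\;X_i\cup\bigcup_{I_j\subsetneq I_i}Y_j\,]$.

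Your proposal has a genuine gap in the containment bookkeeping. You correctly abandon the chain/prefix idea in favor of a sub-Boolean-lattice, but then revert to ``taking the prefix length to dominate those of all predecessors'' as the device to ensure the inclusions among the new sets. These two are incompatible: once $S_{T\cup\{n\}}$ is an arbitrary element of $[S_T\cup\{x_n\},\,S_T\cup\{x_n\}\cup X_j]$, it has no reason to contain $S_{T'\cup\{n\}}$ for $T'\subsetneq T$, since the latter lives partly in an earlier, \emph{disjoint} block $X_{j'}$. (Monotonicity of $R_i(B_m)$ is irrelevant here; and the containment you explicitly worry about, $S_{T\cup\{n\}}\supseteq S_{T'}$ for $T'\subsetneq T$, is automatic via $S_{T\cup\{n\}}\supseteq S_T\supseteq S_{T'}$ --- the nontrivial inclusions are among the new top sets themselves.) The fix is simple but essential: take the lower endpoint of the $j$-th interval to be $S_T\cup\bigcup_{T'\subsetneq T}S_{T'\cup\{n\}}$ rather than $S_T\cup\{x_n\}$. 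That union is disjoint from the fresh block $X_j$, so the interval is still isomorphic to $\B_{R_{2^{n-1}-1+j}(B_m)}$, the color-count goes through unchanged, and all required inclusions are now built in. With this correction your inductive argument and the paper's direct one are the same proof. (The auxiliary element $x_n$ then becomes unnecessary; dropping it makes the coordinate count match exactly.)
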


\begin{proof}
We consider any coloring $c$ on $\B_N$, where $N=\sum_{i=1}^{2^n-1}R_i(B_m)$.
Assume that $\B_N$ does not contain a monochromatic $B_m$ under $c$.
Then we prove there exists a rainbow $B_n$.

Let us arrange the nonempty subsets of $[n]$ as $I_1,\ldots, I_{2^n-1}$
satisfying the condition that $|I_i|\le |I_j|$ if $i<j$.
Partition $[N]$ into $2^n-1$ disjoint subsets $X_i$'s such that
$|X_i|=R_i(B_m)$. For $X,Y\subseteq [N]$, denote $[X,Y]=\{Z\mid X\subseteq Z\subseteq Y\}$.
Note that $[X,Y]$ is isomorphic to $\B_{|Y-X|}$.
Define $Y_0=\varnothing$. In $[Y_0,X_1]$, there exists some set whose color is distinct to $c(Y_0)$,
otherwise $[Y_0,X_1]$ form a monochromatic $B_m$ under $c$. We pick such a set and call it $Y_1$.
Similarly, we can find a set in $[Y_0,X_2]$ whose color is distinct to $c(Y_0)$ and $c(Y_1)$,
otherwise the sets in $[Y_0,X_2]$, isomorphic to $B_{R_2(B_m)}$,
are colored by only two colors, which contains a monochromatic $B_m$.
Again, we pick such a set and call it $Y_2$.
By the same reasoning, we can find disjoint $Y_0,\ldots,Y_n$ with all distinct colors.
These sets will play the roles of the empty set and the singletons of the rainbow $B_n$.

For $n+1\le i \le n+\binom{n}{2}$, we have $|I_i|=2$.
We will find $Y_i$ accordingly so that each $Y_i$ represents a 2-subset of the desired $B_n$.
Suppose $I_i=\{j,k\}$. Then $[Y_j\cup Y_k, X_i\cup Y_j\cup Y_k]$ is isomorphic to $B_{R_i(B_m)}$.
As before, we can find a set with color distinct to $c(Y_0),\ldots ,c(Y_{i-1})$.
Therefore, the set $Y_i$ can be chosen from $[\cup_{j\in I_i}Y_j, X_i\cup(\cup_{j\in I_i}Y_j)]$,
and the resulting sets $Y_0,\ldots, Y_{2^n-1}$ form a rainbow $B_n$.
\end{proof}

{\bf Remark.}
By Theorem~\ref{AW} (Theorem 5, \cite{AW}), $R_k(P)\le C\cdot k$, where $C$ is a constant determined by $P$ only.
Thus, Theorem~\ref{BMBN} is a proof of the existence of $RR(P,Q)$ without using Theorem~\ref{JLM}.
We will have more discussions on this aspect in next section.\bigskip

The rest of the section will be the proof of  Theorem~\ref{B2B2} (3), $RR(B_2,B_2)=6$.
Let us first determine the Boolean Ramsey number $R_3(B_2)$ which will be used as a lower bound for $RR(B_2,B_2)$
in the proof of Theorem~\ref{B2B2} (3).

\begin{theorem}\label{R3B2}
The Boolean Ramsey number $R_3(B_2)$ is equal to 6.
\end{theorem}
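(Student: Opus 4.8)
The plan is to establish the two inequalities $R_3(B_2)\le 6$ and $R_3(B_2)\ge 6$ separately. For the lower bound I would exhibit an explicit $3$-coloring of $\B_5$ with no monochromatic $B_2$ (recall $B_2$ is a copy of $\B_2$, i.e.\ four sets $A\subset B,C\subset D$ with $B\cup C\subseteq D$ and $B\cap C\supseteq A$, realized as $W, W\cup\{x\}, W\cup\{y\}, W\cup\{x,y\}$). A natural candidate is a level-based coloring: since $\B_5$ has six levels $\binom{[5]}{0},\dots,\binom{[5]}{5}$, I would try to color by a function of the size $|X|$, grouping the levels into three color classes in a way that no color class contains a $2$-dimensional Boolean sublattice. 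A $B_2$ spans exactly two consecutive levels $\ell$ and $\ell+1$ at its bottom/middle and middle/top, so it occupies levels $j, j+1, j+2$ for some $j$ (bottom at $j$, the two middle sets at $j+1$, top at $j+2$)—wait, more carefully, a $B_2$ built on base $W$ with $|W|=j$ has sets at levels $j, j+1, j+1, j+2$. Hence any color class that avoids containing three ``spread'' levels in this pattern works; I expect a coloring like colors $\{0,1\},\{2,3\},\{4,5\}$ by size fails (the class $\{2,3\}$ could still contain a $B_2$ with base at level $2$), so instead I would color level $i$ by $i \bmod 3$, or hand-craft a coloring of the middle levels, and verify directly that within each color class no four sets form a $\B_2$. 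Getting a valid $5$-set construction is the first concrete task.

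For the upper bound $R_3(B_2)\le 6$, I would take an arbitrary $3$-coloring $c$ of $\B_6$ and argue a monochromatic $B_2$ must appear. The workhorse should be an interval/doubling argument like the one in the proof of Theorem~\ref{BMBN}: if we have a set $W$ and two disjoint "free" elements $x,y$ outside $W$, the four sets $W, W\cup\{x\}, W\cup\{y\}, W\cup\{x,y\}$ form a $B_2$, so to \emph{avoid} a monochromatic $B_2$ these four sets cannot be monochromatic. More usefully, I would iterate: starting from $\varnothing$ with six free elements, if $c$ of the interval $[\varnothing,\{1,\dots,6\}]$ used only one color we would be done immediately by any $B_2$ inside; with three colors available one controls how fast the "color budget" is exhausted. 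Concretely, I expect to build a chain (or a small tree of sets) $\varnothing = Z_0 \subset Z_1 \subset \cdots$, at each step enlarging by one element and picking a new color, so that after few steps either a $B_2$ is forced inside some interval $[\,Z_i\cup\{x\},\,Z_i\cup\{x,y\}\,]$-type configuration becomes monochromatic, or we run out of colors in fewer than $6$ levels and the remaining sub-Boolean-lattice is monochromatic, hence contains $B_2$. One clean way to phrase it: among the intervals $[\varnothing,[1]],[\varnothing,[2]],\dots$ one shows the colors $c(\varnothing), c([1]),\dots,c([6])$ cannot all be "new", and a repeated value at two comparable levels with enough room between them yields a monochromatic $B_2$; balancing six levels against three colors via pigeonhole gives the bound.

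The main obstacle I anticipate is the upper bound, specifically making the pigeonhole tight enough to get $6$ rather than something larger. Forcing a monochromatic $B_2$ needs not just two sets of the same color but two \emph{comparable} sets of the same color with the right "gap," and then two free elements to complete the square; a careless argument spends elements wastefully and only yields $R_3(B_2)\le 7$ or $8$. I would therefore look for the extremal structure suggested by the $\B_5$ construction—presumably the size-based coloring—and design the $\B_6$ argument to exploit exactly the slack that a sixth level provides, perhaps by a careful case analysis on the colors of the bottom element $\varnothing$, the top element $[6]$, and a few singletons/co-singletons, rather than a purely greedy chain argument. A secondary check is that $B_2$ here means a strong (induced) subposet, so when I produce four sets $W,W\cup\{x\},W\cup\{y\},W\cup\{x,y\}$ I get the strong copy for free (the incomparability of the two middle sets is automatic), which keeps the argument clean.
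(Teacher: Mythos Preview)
Your lower-bound discussion contains a slip: the level coloring that assigns one color to sizes $\{0,1\}$, another to $\{2,3\}$, and a third to $\{4,5\}$ in $\B_5$ \emph{does} avoid a monochromatic $B_2$, and this is exactly the construction the paper uses (stated there as $R_k(B_2)\ge 2k$). A strong copy of $B_2$ consists of sets $A\subsetneq B,C\subsetneq D$ with $B,C$ incomparable, so necessarily $|D|\ge |A|+2$; hence no such copy fits inside two consecutive levels, and your worry about ``a $B_2$ with base at level $2$'' in the class $\{2,3\}$ is unfounded. Separately, be careful: a strong $B_2$ need \emph{not} occupy three consecutive levels (e.g.\ $\varnothing,\{1\},\{2\},[5]$ is one), so your description ``levels $j,j+1,j+1,j+2$'' only covers the Boolean-algebra copies, not all strong copies. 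For the particular level colorings you propose this happens not to matter, but the misconception could bite elsewhere.

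The substantive gap is the upper bound. A chain/pigeonhole argument of the kind you sketch does not reach $6$: repeated colors along a chain yield a monochromatic chain, not a monochromatic $B_2$, and there is no mechanism in your outline for producing the two incomparable middle sets without spending extra ground elements. The paper's proof is of a quite different character and rests on two ingredients you do not mention. First, it uses the known value $R_2(B_2)=4$ (Axenovich--Walzer) as a black box: any $2$-colored copy of $\B_4$ already contains a monochromatic $B_2$. Second, it exploits the sublattices $\B_{i,j}=\{X\subseteq[6]:i\in X,\ j\notin X\}$, each isomorphic to $\B_4$, and the variants $\B^{\updownarrow}_{i,j}$ obtained by adjoining $\varnothing$ and $[6]$. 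Assuming a $3$-coloring of $\B_6$ with no monochromatic $B_2$, one first forces $c(\varnothing)\ne c([6])$ (else the pair $\B_{i,j},\B_{j,i}$ supplies the two incomparable vertices of a $B_2$ between $\varnothing$ and $[6]$), then shows via repeated applications of $R_2(B_2)=4$ to these $\B_4$-subfamilies that at least five singletons and five co-singletons must carry the third color, and finally runs a careful case analysis on the $2$-, $3$-, and $4$-element sets to reach a contradiction. Without the $R_2(B_2)=4$ input and the $\B_{i,j}$ structure I do not see how your greedy approach can get below $7$ or $8$---exactly the shortfall you yourself anticipate.
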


\begin{proof}
For any $k$, it is easy to see $R_k(B_2)\ge 2k$ by coloring every consecutive two levels of $\B_{2k-1}$ with one color.
Thus, the remaining of the proof is to show the $R_3(B_2)\le 6$.

Suppose there exists a 3-coloring $c$ on $\B_6$ without any monochromatic $B_2$ under $c$.
By Theorem 1 in~\cite{AW} that $R_2(B_2)=4$, if a collection $\F$ of sets forms a copy of $B_4$, then $\F$ contains subsets of all the three colors.

Given $\B_n$, for distinct $i$ and $j$ in $[n]$ define the family $\B_{i,j}:=\{X\subset [n] \mid i\in X,j\not\in X\}$.
Note that $\B_{i,j}$ is isomorphic to $\B_{n-2}$, and $\B_{i,j}$ and $\B_{j,i}$ are disjoint.
Now for the coloring $c$, if $c([6])=c(\varnothing)$,
then we consider $\B_{i,j}$ and $\B_{j,i}$ for distinct $i,j\in [6]$.
Since $\B_{i,j}$ and $\B_{j,i}$ are isomorphic to $B_4$ and $R_2(B_2)=4$,
there exist $X\in\B_{i,j}$ and $Y\in\B_{j,i}$ with $c(X)=c(Y)=c([6])=c(\varnothing)$
which form a monochromatic $B_2$, a contradiction.
Thus, we may assume that $c(\varnothing)=1$ and $c([6])=2$.

\textbf{Claim 1:} At least five sets in $\binom{[6]}{ 1}$ and at least five sets in $\binom{[6]}{5}$ are  of color $3$.

We will only show that there is at most one set of color 1 and no set of color 2 in $\binom{[6]}{1}$.
The same idea applies for showing at least five sets of color $3$ in $\binom{[6]}{5}$.
Suppose to the contrary that $c(\{i\})=c(\{j\})=1$ for distinct $i,j\in[6]$.
Note that the sets in $\F=\{X\mid \{i,j\}\subseteq X \subseteq [6]\}$ form a $B_4$.
Thus, there exists $X_0\in\F$ such that $c(X_0)=1$.
Then the four sets $\varnothing,\{i\},\{j\},$ and $X_0$ form a monochromatic $B_2$, a contradiction.
If $c(\{i\})=2$ for some $i\in[6]$,
then the subsets of color 2 in the family $\G'=\{X\mid \{i\}\subseteq X\subseteq  [6]\}$ must form a chain $\C$.
Suppose that $\C$ is contained in $\{\{i\},\{i,j\},\ldots,[6]\}$ for some $j$.
Then $\G=(\B_{i,j}-\{\{i\}\})\cup\{\varnothing\}$ is isomorphic to $\B_4$ and contains no subset of color 2.
As before, $\G$ contains a monochromatic $B_4$. Thus, $c(\{i\})\neq 2$ for any $i$.
Hence at least five sets in $\binom{[6]}{ 1}$  are of color $3$. The claim is proved.

Define $\B_{i,j}^{\updownarrow}=\{X\mid\{i\}\subsetneq X\subsetneq[6]\setminus\{j\}\}\cup\{\varnothing,[6]\}$ for $i
\neq j\in[6]$. This is also a collection of sets forming $B_4$.
As before, there exists $X_{i,j}\in \B_{i,j}^{\updownarrow}$ with $c(X_{i,j})=3$. Since $c(\varnothing)=1$ and $c([6])=2$,
we have $X_{i,j}\in\binom{[6]}{2}\cup\binom{[6]}{3}\cup\binom{[6]}{4}$.
However, we will prove that this is impossible.

\textbf{Claim 2:} For any set $X\in \binom{[6]}{2}\cup\binom{[6]}{3}\cup\binom{[6]}{4}$, either $c(X)=1$ or $c(X)=2$.

By Claim 1, we can pick a pair of integers $i_0$ and $j_0$ such that $c(X)=3$ for any
$X\in (\binom{[6]}{1} \cup \binom{[6]}{5}) \setminus\{\{i_0\},[6]\setminus\{j_0\}\}$.

\textbf{Case 1:} $i_0=j_0$.

For any $k\in[6]\setminus\{i_0\}$, we have $c(\{i_0,k\})\neq 3$; otherwise we pick $\ell\in [6]\setminus(\{i_0,k\}\cup X_{k,i_0})$
such that $\{k\}$, $\{i_0,k\}$, $X_{k,i_0}$ and $[6]\setminus\{\ell\}$ form a monochromatic $B_2$ of color $3$.
Similarly, $c([6]\setminus\{i_0,k\})\neq 3$ for any $k\in[6]\setminus\{i_0\}$, otherwise $\{\ell\}$, $X_{\ell ,i_0}$, $[6]\setminus\{i_0,k\}$ and $[6]\setminus\{k\}$
form a monochromatic $B_2$ of color $3$.

For distinct $h,k\in[6]\setminus\{i_0\}$ we also have $c(\{h,k\})\neq 3$; otherwise we pick $\ell\in [6]\setminus(\{i_0,h,k\}\cup X_{k,h})$ such that
$\{k\}$, $\{h,k\}$, $X_{k,h}$ and $[6]\setminus\{\ell\}$ form a monochromatic $B_2$ of color 3.
We cannot find such an $\ell$ only when $\{i_0,h,k\}\cup X_{k,h}=[6]$, which means $X_{k,h}=[6]\setminus\{i_0,h\}$.
This is impossible since $c([6]\setminus\{i_0,h\})\neq 3$.
Similarly, $c([6]\setminus\{h,k\})\neq 3$ for distinct $h,k\in[6]\setminus\{i_0\}$, otherwise we pick $\ell\in [6]\setminus(\{i_0,h,k\}\cup X_{k,h})$ such that
$\{\ell\}$, $X_{k,h}$, $[6]\setminus\{h,k\}$ and $[6]\setminus\{h\}$ form a monochromatic $B_2$ of color 3.
We cannot find such an $\ell$ only when $\{i_0,h,k\}\cup X_{k,h}=[6]$, which means $X_{k,h}=[6]\setminus\{i_0,h\}$.
This is impossible since $c([6]\setminus\{i_0,h\})\neq 3$.
Now the only possible size of $X_{i,j}$ is 3.

For distinct $h,k\in[6]\setminus\{i_0\}$, since $|X_{h,k}|=3$, we can pick $\ell\in [6]\setminus(\{i_0,h,k\}\cup X_{h,k})$.
Note that $c(\{i_0,h,k\})\neq 3$, otherwise $\{k\}$, $\{i_0,h,k\}$, $X_{h,k}$ and $[6]\setminus\{\ell\}$ form a monochromatic $B_2$ of color 3.
Similarly, we also have $c([6]\setminus\{i_0,h,k\})\neq 3$ for distinct $h,k\in[6]\setminus\{i_0\}$, otherwise $\{\ell\}$, $[6]\setminus\{i_0,h,k\}$, $X_{h,k}$ and $[6]\setminus\{k\}$ form a monochromatic $B_2$ for some $\ell\in ([6]\setminus\{i_0,h,k\})\cap X_{h,k}$.
Therefore, $c(X)\neq 3$ for all $X\in\binom{[6]}{3}$.
As a consequence, $X_{i,j}$ does not exist for any distinct $i,j\in[6]$.

\textbf{Case 2:} $i_0\neq j_0$.

First, we have $c(\{i_0,j_0\})\neq 3$, otherwise $\{j_0\}$, $\{i_0,j_0\}$,  $X_{j_0,i_0}$ and $[6]\setminus\{\ell\}$ form a monochromatic $B_2$ of color $3$
with some $\ell\in [6]\setminus(\{i_0,j_0\}\cup X_{j_0,i_0})$.
Similarly, $c([6]\setminus\{i_0,j_0\})\neq 3$, otherwise $\{\ell\}$, $[6]\setminus\{i_0,j_0\}$, $X_{j_0,i_0}$ and $[6]\setminus\{i_0\}$ form a monochromatic $B_2$ of color $3$ with some $\ell\in ([6]\setminus\{i_0,j_0\})\cap X_{j_0,i_0}$.

Moreover, we have $c(\{i_0,k\})\neq 3$ for any $k\in[6]\setminus\{i_0,j_0\},$ otherwise we can find some $\ell\in [6]\setminus (\{i_0,j_0,k\}\cup X_{k,i_0})$ such that $\{k\}$, $\{i_0,k\}$, $X_{k,i_0}$ and $[6]\setminus\{\ell\}$ form a monochromatic $B_2$ of color 3.
We cannot find such an $\ell$ only when $\{i_0,j_0,k\}\cup X_{k,i_0}=[6]$, which means $X_{k,i_0}=[6]\setminus\{i_0,j_0\}$. This is impossible since $c([6]\setminus\{i_0,j_0\})\neq 3$.
Similarly, we have $c([6]\setminus\{k,j_0\})\neq 3$ for any $k\in[6]\setminus\{i_0,j_0\}$,
otherwise $\{\ell \}$, $X_{j_0,k}$, $[6]\setminus\{k,j_0\}$ and $[6]\setminus \{k\}$ form a monochromatic $B_2$ of color 3
with some $\ell \in ([6]\setminus\{i_0,j_0,k\})\cap X_{j_0,k}$.
Again, such an $\ell $ must exist
since $X_{j_0,k}\neq\{i_0,j_0\}$ by the assumption of their colors.

Next, $c(\{k,j_0\})\neq 3$ for any $k\in[6]\setminus\{i_0,j_0\}$, otherwise $\{j_0\}$, $\{k,j_0\}$, $X_{j_0,k}$ and $[6]\setminus\{\ell\}$ form a monochromatic $B_2$ of color $3$ with some $\ell\in [6]\setminus(\{k,i_0,j_0\}\cup X_{j_0,k})$. As before,
$\ell$ does not exist only if $X_{j_0,k}=[6]\setminus \{i_0,j_0\}$.
Similarly, $c([6]\setminus\{i_0,k\})\neq 3$ for any $k\in[6]\setminus\{i_0,j_0\}$.

Finally, $c(\{h,k\})\neq 3$ for all distinct $h,k\in[6]\setminus\{i_0,j_0\}$,
otherwise $\{k\}$, $\{h,k\}$, $X_{k,h}$ and $[6]\setminus\{\ell\}$ form a monochromatic $B_2$ with some $\ell\in [6]\setminus(\{j_0,h,k\}\cup X_{k,h})$.
Note that $X_{k,h}\neq [6]\setminus\{j_0,h\}$ since $c([6]\setminus\{j_0,h\})\neq 3$ has been shown above. Similarly, $c([6]\setminus\{h,k\})\neq 3$ for all distinct $h,k\in[6]\setminus\{i_0,j_0\}$ is also true. Now we have $|X_{i,j}|=3$.

Assume $X_{i_0,j_0}=\{i_0,h,k\}$ for some $h,k\in[6]\setminus\{i_0,j_0\}$. Then this leads to $X_{h,k}=[6]\setminus\{i_0,j_0,k\}$,
otherwise $\{h\}$, $X_{i_0,j_0}$, $X_{h,k}$ and $[6]\setminus\{\ell\}$ form a monochromatic $B_2$
with some $\ell\in [6]\setminus(X_{i_0,j_0}\cup X_{h,k}\cup\{j_0\}).$ With the same argument, one has $X_{k,h}=[6]\setminus\{i_0,j_0,h\}$.
However, we now have four sets $\{\ell\}$, $X_{h,k}$, $X_{k,h}$ and $[6]\setminus\{i_0\}$ forming a monochromatic $B_2$ with $\ell\in [6]\setminus\{i_0,j_0,h,k\}$, and a contradiction occurs.

Conclusively, the 3-coloring $c$ on $\B_6$ without a monochromatic $B_2$ cannot exist.
\end{proof}

To complete the proof of Theorem~\ref{B2B2} (3), we need more tools.
Let $\vee$ and $\wedge$ denote the posets consisting of three elements $a,b$, and $c$ with $a\le b, c$ and  $a,b\le c$, respectively.

\begin{lemma}\label{lemv^}
For any poset $P$, we have
\[R_2(P)\leq RR(P,\{\vee,\wedge\})\leq \max\{R_2(P),\dim_2(P)+2\}.\]
Moreover, if $2h(P)\geq \dim_2(P)+4$, then $RR(P,\{\vee,\wedge\})=R_2(P)$.
\end{lemma}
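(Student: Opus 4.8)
The plan is to establish the two inequalities separately and then derive the ``moreover'' part as a consequence of the lower-bound inequality together with a lower-bound coloring. For the left inequality $R_2(P)\le RR(P,\{\vee,\wedge\})$, I would argue contrapositively: take $n=RR(P,\{\vee,\wedge\})-1$, so there is a coloring of $\B_n$ with no monochromatic $P$ and no rainbow $\vee$ or $\wedge$. The point is that \emph{any} $2$-coloring of $\B_n$ trivially has no rainbow $\vee$ and no rainbow $\wedge$, since a rainbow copy of $\vee$ (or $\wedge$) needs three distinct colors. Hence this coloring, viewed as a $2$-coloring, already witnesses $R_2(P)>n$, giving $R_2(P)\ge RR(P,\{\vee,\wedge\})$; wait --- the inequality I want is $R_2(P)\le RR(P,\{\vee,\wedge\})$, so instead I run it the other way: given a $2$-coloring of $\B_N$ with $N=RR(P,\{\vee,\wedge\})$, it is in particular a coloring with no rainbow $\vee$ and no rainbow $\wedge$, so by definition of $RR$ it must contain a monochromatic $P$; thus $R_2(P)\le RR(P,\{\vee,\wedge\})$.

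For the right inequality $RR(P,\{\vee,\wedge\})\le \max\{R_2(P),\dim_2(P)+2\}$, let $N=\max\{R_2(P),\dim_2(P)+2\}$ and take any coloring $c$ of $\B_N$ that contains no rainbow $\vee$ and no rainbow $\wedge$; I must produce a monochromatic $P$. The key structural observation is that forbidding a rainbow $\vee$ means: for every element $X$, the colors appearing on the up-set $\{Y : Y\supsetneq X\}$ that differ from $c(X)$ are severely constrained --- in fact, if two elements $Y_1,Y_2$ both strictly contain $X$ and $c(Y_1),c(Y_2),c(X)$ were all distinct, that is a rainbow $\vee$. So above each $X$, at most one color other than $c(X)$ occurs; symmetrically (no rainbow $\wedge$), below each $X$ at most one color other than $c(X)$ occurs. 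I would use this to show that the total number of colors used on $\B_N$ is at most $2$ \emph{or} that the structure of $\B_N$ forces a monochromatic copy of $\B_{\dim_2(P)}$ (hence a copy of $P$). Concretely: if only $2$ colors are used, then since $N\ge R_2(P)$ we get a monochromatic $P$ directly. If a third color appears, I would chase the local constraints to pin down where colors can change along chains --- each maximal chain of length $N+1$ in $\B_N$ can, by the ``at most one other color above/below'' rule, change color at most a bounded number of times; I expect to show a color change can only happen near the top or near the bottom, so there is a long monochromatic sub-interval $[X,Y]$ with $|Y\setminus X|\ge \dim_2(P)$, which contains a monochromatic $P$. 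The threshold $\dim_2(P)+2$ should be exactly what is needed so that after discarding the possibly-bad bottom and top levels, a Boolean interval of dimension $\dim_2(P)$ survives.

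For the ``moreover'' statement, assuming $2h(P)\ge \dim_2(P)+4$: I would show $R_2(P)\ge \dim_2(P)+2$ so that the max in the upper bound equals $R_2(P)$, forcing equality with the lower bound. The relevant lower bound is $R_2(P)\ge 2h(P)-1$, obtained by the standard coloring of $\B_{2h(P)-2}$ that assigns color $1$ to the bottom $h(P)-1$ levels and color $2$ to the top $h(P)-1$ levels: a monochromatic copy of $P$ needs a chain of length $h(P)$ within one color class, but each class spans only $h(P)-1$ consecutive levels, so no monochromatic $P$ exists. Hence $R_2(P)\ge 2h(P)-1\ge \dim_2(P)+3>\dim_2(P)+2$, so $\max\{R_2(P),\dim_2(P)+2\}=R_2(P)$, and combining with the left inequality gives $RR(P,\{\vee,\wedge\})=R_2(P)$.

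The main obstacle I anticipate is the middle step of the upper bound: turning the purely local constraint ``at most one extra color in the up-set and at most one extra color in the down-set of each element'' into the global conclusion that either two colors suffice or a large monochromatic Boolean interval exists. The delicate point is ruling out a coloring that uses three colors but cleverly confines each ``foreign'' color to thin regions near the extremes; I would handle this by a careful case analysis on which colors $c(\varnothing)$ and $c([N])$ take and propagating the constraints inward level by level, showing that any color appearing strictly between levels $1$ and $N-1$ must agree with $c(\varnothing)$ or with $c([N])$, after which a pigeonhole/Dilworth-type argument on the middle $\dim_2(P)$ levels finishes it.
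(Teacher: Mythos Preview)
Your lower-bound argument is fine and matches the paper. The ``moreover'' part has an off-by-one slip: $\B_{2h(P)-2}$ has $2h(P)-1$ levels, so splitting into two blocks of $h(P)-1$ consecutive levels leaves one level uncolored. The paper colors $\B_{2h(P)-3}$ instead, getting $R_2(P)\ge 2h(P)-2$; under the hypothesis $2h(P)\ge\dim_2(P)+4$ this still gives $R_2(P)\ge\dim_2(P)+2$, so your conclusion survives.

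The real problem is the upper bound. Your ``key structural observation'' --- that above each $X$ at most one color other than $c(X)$ occurs --- is false. A rainbow $\vee$ requires the two upper elements to be \emph{incomparable}; if $Y_1\subset Y_2$ then $X,Y_1,Y_2$ is a chain, not a $\vee$. Concretely, in $\B_2$ color $\varnothing\mapsto 1$, $\{1\},\{2\}\mapsto 2$, $\{1,2\}\mapsto 3$: there is no rainbow $\vee$ and no rainbow $\wedge$, yet two colors different from $c(\varnothing)$ appear above $\varnothing$. Note that if your observation were correct, applying it at $X=\varnothing$ would force the whole lattice to use at most two colors, yielding $RR(P,\{\vee,\wedge\})\le R_2(P)$ outright and making the $\dim_2(P)+2$ term superfluous --- a sign that the argument was proving too much.

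The paper's route to the upper bound is short and different from your chain-color-change plan. With $c(\varnothing)=1$ and at least three colors present, pick $X,Y$ with $c(X)=2$, $c(Y)=3$. If $X,Y$ are incomparable, $\{\varnothing,X,Y\}$ is a rainbow $\vee$; so assume $X\subset Y$. If $Y\neq[N]$, choose $i\in X$ and $j\notin Y$, so $X,Y\in\B_{i,j}$. Every $W\in\B_{j,i}$ is incomparable to both $X$ and $Y$, hence must have $c(W)=1$ (otherwise $\varnothing,W$ together with $X$ or $Y$ is a rainbow $\vee$). Thus $\B_{j,i}\cong\B_{N-2}$ is monochromatic, and since $N-2\ge\dim_2(P)$ it contains $P$. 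The case $Y=[N]$ is symmetric using $\wedge$. This is the step where $\dim_2(P)+2$ genuinely enters.
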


\begin{proof}
Let $M=RR(P,\{\vee,\wedge\})$ and $N=\max\{R_2(P),\dim_2(P)+2\}$.
For any 2-coloring on $\B_M$, it must contain a monochromatic $P$ since there is no rainbow $\vee$ or $\wedge$ under a 2-coloring. So $M\geq R_2(P)$.

Assume that $c$ is a $k$-coloring on $\B_N$. 
If $k\leq 2$, then $\B_N$ contains a monochromatic $P$ since $N\geq R_2(P)$. So we consider $k\geq 3$.
Without lose of generality, let $c(\varnothing)=1$.
Pick two subsets $X$ and $Y$ of $[n]$ such that  $c(X)=2$ and $c(Y)=3$,
if $X\not\subset Y$ and $Y\not\subset X$, then $\varnothing$, $X$, and $Y$ form a rainbow $\vee$.
So we may assume $X\subset Y$.
If $Y\neq [n]$, then we can pick $i,j\in[n]$ so that $X,Y\in \B_{i,j}$.
Recall that $\B_{i,j}$ and $\B_{j,i}$ are disjoint. We must have $c(X)= 1$ for any $W\in\B_{j,i}$ , otherwise
$\varnothing$, $W$, and one of $X$ or $Y$ form a rainbow $\vee$. Then $\B_{j,i}$ form a monochromatic $B_{N-2}$,
as well as a monochromatic $P$ since $N-2\geq \dim_2(P)$.
If $Y=[n]$, then we pick $i,j\in[n]$ so that $X\in \B_{i,j}$. Now we must have $c(Z)=2$ for every $Z\in \B_{j,i}$,
otherwise either $\varnothing$, $X$, and $Z$ form a rainbow $\vee$, or $X$, $Z$, and $Y$ form a rainbow $\wedge$.
As before, the monochromatic $\B_{j,i}$ contains $P$. Therefore, $N\ge RR(P,\{\vee,\wedge\})$.

To see the ``moreover'' part, we prove the fact $R_2(P)\geq 2h(P)-2$
by constructing a 2-coloring on $\B_{2h(P)-3}$ without a monochromatic $P$.
Namely, assign two colors to the elements in the bottom $h(P)-1$ levels
and the elements in the top $h(P)-1$ levels of $\B_{2h(P)-3}$, respectively.
Thus, if $2h(P)\geq \dim_2(P)+4$, then $R_2(P)\geq \dim_2(P)+2$.
We have $\max\{R_2(P),\dim_2(P)+2\}=R_2(P)$, and $RR(P,\{\vee,\wedge\})=R_2(P)$.
\end{proof}

{\bf Remark.} Since $\dim_2(B_n)=n$ and $h(B_n)=n+1$, we have $2h(B_n)\ge \dim_2(B_n)+4$ for $n\ge 2$.
So $RR(B_n,\{\vee,\wedge\})=R_2(B_n)$ for $n\ge 2$.

\bigskip

\noindent{\em Proof of Theorem~\ref{B2B2} (3).}
Consider any $k$-coloring $c$ on $\B_6$.
If $k\leq 3$, then $\B_6$ contains a monochromatic $B_2$ since $R_3(B_2)=6$ by Theorem~\ref{R3B2}.
We assume $k\ge 4$.

Suppose that $c(\varnothing)=c([6])$.
Without lose of generality, let $c(\varnothing)=c([6])=1$ .
For $i\neq j$,
if both the families $\B_{i,j}$ and $\B_{j,i}$ contain some sets of color 1,
say $c(X_{i,j})=c(X_{j,i})=1$ for $X_{i,j}\in \B_{i,j}$ and $X_{j,i}\in \B_{j,i}$,
then $\varnothing$, $X_{i,j}$, $X_{j,i}$, and $[6]$ form a monochromatic $B_2$.
We may suppose $\B_{i,j}$ contains no set of color 1.
Note that $\B_{i,j}$ is isomorphic to $\B_4$ and $RR(B_2,\{\vee,\wedge\})=4$.
Either $\B_{i,j}$ contains a monochromatic $B_2$, or a rainbow $\vee$, or a rainbow $\wedge$.
For the latter two cases, the rainbow $\vee$ (or $\wedge$) together with $[6]$ (or $\varnothing$) form a rainbow $B_2$.

If $c(\varnothing)\ne c([6])$, let $c(\varnothing)=1$ and $c([6])=2$.
Pick two subsets $X,Y\subset [6]$ with $c(X)=3$, $c(Y)=4$. If both $X\not\subset Y$ and $Y\not\subset X$, then
the four sets $\varnothing$, $X$, $Y$, and $[6]$ form a rainbow $B_2$.
So we may assume $X\subset Y$ or $Y\subset X$.
Moreover, we have $X,Y\in\B_{i,j}$ for some $i,j\in [6]$.
If $c(Z)\geq 3$ for some $Z\in\B_{j,i}$, then there is a rainbow $B_2$ formed by $\varnothing$, $Z$, $[6]$, and one of $X$ and $Y$.
If $c(Z)=1$ or 2 for all $Z\in\B_{j,i}$,
then there exists a monochromatic $B_2$ since $\B_{j,i}$ is isomorphic to $\B_{4}$ and $R_2(B_2)=4$.

Therefore, we have $RR(B_2,B_2)\le 6$, and the proof is completed.
\qed

\section{Discussions}

We gave an upper bound for $R(A_m, C_n)$ in Proposition~\ref{AMCN}.
A lower bound can be deduced in the following way:
Recall that $N_{m,n}$ is the minimum integer $N$ such that $\binom{N}{\lfloor N/2\rfloor}\ge (m-1)(n-1)+1$.
Now consider the set $S$ that is a union of disjoint sets
$X=\{x_1,\ldots, x_{N_{m,2}-1}\}$ and $ Y=\{y_1,\ldots, y_{n-2}\}$.
For each subset $Z\subseteq S$, color it according to $Z\cap Y$.
That is, we assign the same color to $Z_1$ and $Z_2$ if and only if if $Z_1\cap Y=Z_2\cap Y$.
Thus, the subsets in a color class form a Boolean poset $B_{N_{m,2}-1}$, which do not contain $A_m$.
On the other hand, if $Z_1,\ldots, Z_k$ form a rainbow chain,
then $Z_1\cap Y,\ldots, Z_k\cap Y$ also form aa rainbow chain.
However, any rainbow chain formed by subsets of $Y$ has length at most $n-1$.
So there does not exist a rainbow $C_n$.
Therefore, we have $RR(A_m,C_n)\ge N_{m,2}+n-3$.
Obviously, there is a huge gap between the lower and upper bounds.
It would be interesting if one can tighten the gap.

One of our main interests is to estimate $RR(B_m,B_n)$.
Theorem~\ref{JLM} provides us an upper bound $RR(B_m,B_n)\le n2^{(2n+1)2^{m-1}-2}$.
We also have $RR(B_m,B_n)\le \sum_{i=1}^{2^n-1}R_i(B_m)$.
In the following, we give an estimation of $\sum_{i=1}^{2^n-1}R_i(B_m)$ in terms of $m$ and $n$.
Recall that in the Remark after Theorem~\ref{BMBN}, we mentioned that $R_k(P)\le C\cdot k$,
where $C$ is a constant determined by $P$.
Indeed, this is consequence from a result of M\'{e}roueh in~\cite{M}.
Let us introduce the Lubell functions and Forbidden subposet problems studied intensively recently.
The {\em Lubell function} $\bar{h}_n(\F)$ for a family $\F$ of subsets of $[n]$ is defined as
\[
\bar{h}_n(\F):=\sum_{F\in \F}\frac{1}{\binom{n}{|F|}}.
\]
It is obvious that $\bar{h}_n(\B_n)=n+1$.
For the background of the Lubell functions and Forbidden subposet problems,
we refer the readers to a recent survey~\cite{GL}.
In the literature, Lu and Milans~\cite{LM} proposed the conjecture:

\begin{conjecture}{\rm\cite{LM}}\label{LM}
Let $\lambda_n(P)$ be the maximum value of the Lubell function
for families of subsets of $[n]$ not containing $P$ as a subposet. Then $\limsup \lambda_n(P)$ is finite.
\end{conjecture}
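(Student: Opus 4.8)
\noindent\textbf{An approach to Conjecture~\ref{LM}.}
The plan is to first reduce the statement to the Boolean lattices. By the definition of the $2$-dimension, every poset $P$ embeds into $B_{\dim_2(P)}$, so any family containing $B_{\dim_2(P)}$ also contains $P$; contrapositively, a $P$-free family is $B_d$-free for $d=\dim_2(P)$, and hence $\lambda_n(P)\le\lambda_n(B_d)$ for every $n$. It therefore suffices to prove that for each fixed $m$ there is a constant $c_m$ with $\limsup_n\lambda_n(B_m)\le c_m$.

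For the Boolean case I would induct on $m$. When $m=1$, a $B_1$-free family is an antichain, so the LYM inequality gives $\bar{h}_n(\F)\le 1$ and $\lambda_n(B_1)=1$. For the inductive step, assume $\lambda_n(B_{m-1})\le c_{m-1}$ for all $n$, let $\F\subseteq 2^{[n]}$, and suppose $\bar{h}_n(\F)$ exceeds a threshold $c_m$ to be determined; the goal is to exhibit a copy of $B_m$. The basic device is the uniformly random maximal chain $\mathcal C$ of $\B_n$, for which $\mathbb{E}\bigl[\,|\F\cap\mathcal C|\,\bigr]=\bar{h}_n(\F)$; thus a large Lubell value forces a positive-measure family of maximal chains to meet $\F$ in many sets. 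The idea is then to glue such long vertical traces into the wide structure of $B_m$ by localizing to intervals $[X,Y]=\{Z:X\subseteq Z\subseteq Y\}$, each order-isomorphic to a smaller Boolean lattice, in which a suitable ``local Lubell density'' remains large; inside such an interval one either finds the single extra rank needed to lift a $B_{m-1}$ to a $B_m$, or recurses via the inductive hypothesis applied to the link sublattices $\B_{i,j}=\{Z:i\in Z,\ j\notin Z\}$ already used in the proofs of Theorems~\ref{R3B2} and~\ref{BMBN}.

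Concretely, one would average a local Lubell function over all intervals of a fixed small width: if $\bar{h}_n(\F)$ is large, some interval carries local weight above the inductive threshold, so the trace of $\F$ there already contains a smaller Boolean poset; iterating $m$ times with geometrically shrinking widths, and using a supersaturation input in the spirit of Theorem~\ref{JLM} to pass from ``many good intervals'' to ``one good interval with room to spare'', should close the induction with a finite (if tower-type) bound $c_m$. A Lubell-type estimate along the lines of M\'eroueh's work~\cite{M} could serve as the engine at each step.

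The hard part, and the reason the conjecture remains open in general, is that the Lubell function is a fractional rather than a cardinality statistic: a $B_m$-free family need not be dense on any level, the middle $m$ levels already give $\bar{h}_n=m$ without a $B_m$ (so $c_m\ge m$), and one can pad further with sparse pieces. What is missing is a robust stability statement asserting that once $\bar{h}_n(\F)$ passes an absolute ceiling the surplus weight is forced to straddle $m+1$ distinct ranks in a structured enough way to contain $B_m$, uniformly in $n$. Already the diamond case $P=B_2$ turns on controlling exactly this phenomenon, and it is where any argument of the above shape will stand or fall.
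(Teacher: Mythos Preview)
Your proposal is not a proof, and you say so yourself: after outlining the reduction and an inductive scheme, you identify the ``hard part'' and concede that the argument does not close it. That self-assessment is accurate. The averaging-over-intervals idea you sketch does not, by itself, force a copy of $B_m$: there is no evident mechanism for converting a surplus of local Lubell mass into the precise incidence structure of a Boolean algebra, and the ``supersaturation input in the spirit of Theorem~\ref{JLM}'' that you invoke at each step is essentially as strong as the conjecture you are trying to prove. So what you have written is a plausible heuristic, not an argument.

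More to the point, you appear to believe the conjecture is still open. It is not, and the paper says so in the very next sentence after stating Conjecture~\ref{LM}: M\'eroueh~\cite{M} verified it by proving that any family $\F\subseteq 2^{[n]}$ with $\bar{h}_n(\F)>1000\,m^7 16^m$ must contain a copy of $B_m$, and hence any poset of $2$-dimension at most $m$. Your reduction from a general $P$ to $B_{\dim_2(P)}$ is correct and is exactly the reduction the paper uses, but the reference you mention only in passing---``a Lubell-type estimate along the lines of M\'eroueh's work''---is in fact the complete resolution, not merely an engine for one inductive step. The paper itself does not give an independent proof; it simply quotes M\'eroueh's bound and applies it to deduce $R_k(B_m)\le 1000\,m^7 16^m k$ and the ensuing estimate for $RR(B_m,B_n)$.
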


M\'{e}roueh~\cite{M} verified this conjecture by showing that if  $\bar{h}_n(\F)> 1000m^7 16^m$,
then the family $\F$ must contain $B_m$,
as well as any poset whose 2-dimension is at most $m$.
Thus, when $N\ge  1000m^7 16^m k$, any $k$-coloring on $\B_N$ must contain one color class $\F_i$ of subsets of color $i$ with
$\bar{h}(\F_i)>1000m^7 16^m$, and hence a monochromatic $B_m$ of color $i$.
In other words, $R_k(B_m)\le 1000m^7 16^m k$.
Using his result, we conclude that
\[RR(B_m,B_n)\le \sum_{i=1}^{2^n-1}R_i(B_m)\le 2^{n-1}(2^n-1)1000m^7 16^m< m^7 2^{2n+4m+9}. \]
This upper is better than that derived from Theorem~\ref{JLM} under certain circumstances.

On the other hand, we have the lower bound $RR(B_m, B_n)\ge R_{2^n-1}(B_m)$.
However, $R_k(B_m)$ is unknown in general.
Generalizing the construction in the proof of Theorem~\ref{R3B2},
showing that $R_k(B_2)\ge 2k$, shows $R_k(B_m)\ge mk$.
When $m=2$, we know $R_k(B_2)=2k$ for $k\le 3$.
By the well-known fact $\lambda_n(B_2)\le 2\frac{2}{3}$ in the literature of forbidden subposet problems~\cite{GLL,GMT,KMY,LM},
one can show  $R_k(B_2)\le (2\frac{2}{3})k$, and hence $8\le R_4(B_2)\le 10$.
An observation is that if there exists a 4-coloring $c$ on $\B_8$ without a monochromatic $B_2$,
then $c(\varnothing)\neq c([8])$, otherwise we can find a monochromatic $B_2$ formed by
$\varnothing$, $[8]$, some $X_{i,j}\in \B_{i,j}$ and some $X_{j,i}\in \B_{j,i}$ inductively.
However, the remaining case is a little complicated, and we expect a more clever argument.
So we leave the following open problem:
\begin{conjecture}
For all $k\ge 1$, the Boolean Ramsey number $R_k(B_2)=(2+o_k(1))k$.
\end{conjecture}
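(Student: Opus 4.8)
\medskip
\noindent\emph{Towards the conjecture.}
The lower bound $R_k(B_2)\ge 2k$ is the layered colouring used in the proof of Theorem~\ref{R3B2}: give colour $i$ to the two consecutive levels $\binom{[2k-1]}{2i-2}\cup\binom{[2k-1]}{2i-1}$ of $\B_{2k-1}$, and observe that a monochromatic $B_2$ would occupy sets of at least three distinct sizes. So the conjecture amounts to the matching upper bound $R_k(B_2)\le(2+o_k(1))k$, and the plan is to prove its contrapositive. Let $k$ be large, let $c$ be a $k$-colouring of $\B_N$ with no monochromatic $B_2$, and let $\F_1,\dots,\F_k$ be its colour classes; each $\F_i$ is diamond-free (that is, $B_2$-free), the $\F_i$ partition $2^{[N]}$, and Lubell functions are additive over disjoint unions, so
\[
N+1=\bar{h}_N\!\left(2^{[N]}\right)=\sum_{i=1}^{k}\bar{h}_N(\F_i)\le k\cdot\lambda_N(B_2).
\]
Inserting the known bound $\lambda_N(B_2)\le\tfrac83$ gives only $R_k(B_2)\le\tfrac83 k$, so the entire difficulty lies in pushing the constant $\tfrac83$ down to $2$.

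The cleanest route is to pass the problem to the \emph{diamond Lubell problem}. The display above shows that an upper bound $\lambda_n(B_2)\le 2+\varepsilon$ holding for all $n\ge n_0(\varepsilon)$ forces $N=R_k(B_2)-1\le\max\{n_0(\varepsilon),(2+\varepsilon)k\}$, hence $R_k(B_2)\le(2+\varepsilon)k$ once $k$ is large; letting $\varepsilon\to 0$ and combining with $R_k(B_2)\ge 2k$ gives exactly $R_k(B_2)=(2+o_k(1))k$. In other words, the conjecture would follow from the Lubell form of the diamond conjecture, $\limsup_n\lambda_n(B_2)=2$, and every unconditional improvement of the upper bound for $\lambda_n(B_2)$ below $\tfrac83$ immediately improves the constant in $R_k(B_2)\le ck$. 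The first concrete step I would take is therefore to look for a saving in the single-family inequality itself --- by averaging the diamond-free count not over full maximal chains but over short random sub-intervals $[U,V]\cong\B_t$ for a fixed small $t$, where a diamond-free family is forced below the value it would get from two full levels --- and, in parallel, to track for each colour how much ``mass'' $\F_i$ is forced to omit from its neighbouring levels. Since those omitted sets are distributed among the other colour classes, one can hope to upgrade the per-colour estimate to a genuinely partition-level inequality $\sum_{i=1}^k\big(\bar{h}_N(\F_i)-2\big)=o(k)$, which would give $N\le 2k+o(k)$ directly.

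The hard part will be this last step, and I do not expect it to go through without essentially resolving the diamond problem. A single diamond-free family can beat two full levels by a positive constant, so the per-colour surplus $\bar{h}_N(\F_i)-2$ is of size $\Theta(1)$ rather than $o(1)$, and showing it cannot be $\Theta(1)$ for \emph{all} $k$ classes at once is a stability statement for diamonds that is open even for the coarser quantity ``the maximum size of a diamond-free family in $2^{[n]}$'', where the best known bounds still exceed $2\nchn$ by a constant factor. A realistic and still useful deliverable of this plan is the intermediate estimate $R_k(B_2)\le ck$ with an explicit $c<\tfrac83$ --- obtained either by quoting a sharper upper bound for $\lambda_n(B_2)$ in the display above, or by the short-sub-interval averaging sketched here --- while the full conjecture should be regarded as being of roughly the same difficulty as the diamond conjecture.
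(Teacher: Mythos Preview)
This statement is posed in the paper as an open conjecture; the paper gives no proof, only the surrounding remarks that $R_k(B_2)\ge 2k$ via the layered colouring and $R_k(B_2)\le \tfrac83 k$ via the Lubell bound $\lambda_n(B_2)\le \tfrac83$. Your proposal is explicitly not a proof either, and you say so clearly, so there is no discrepancy to flag on that front.

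Your discussion is sound and goes somewhat further than the paper's. The additivity argument $N+1=\sum_i \bar h_N(\F_i)\le k\,\lambda_N(B_2)$ is exactly the mechanism behind the paper's $\tfrac83 k$ bound, and your observation that $R_k(B_2)=(2+o_k(1))k$ would follow from $\limsup_n\lambda_n(B_2)=2$ is correct. You are also right to immediately flag the obstruction: it is known (Griggs--Li--Lu) that diamond-free families can have Lubell value bounded away from $2$ by an absolute constant, so the per-colour inequality alone cannot reach $2+o(1)$, and one genuinely needs a partition-level statement of the type $\sum_i(\bar h_N(\F_i)-2)=o(k)$. That is a fair summary of where the difficulty lies; the paper does not spell this out.

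One small caution: calling ``$\limsup_n\lambda_n(B_2)=2$'' the \emph{Lubell form of the diamond conjecture} is misleading phrasing, since that statement is in fact known to be false (which you yourself use two sentences later). It would be cleaner to say that the Ramsey conjecture would follow from $\lambda_n(B_2)\to 2$, note that this is too strong, and then move directly to the partition-level formulation. Your bottom line---that the conjecture is plausibly of comparable difficulty to the diamond problem, and that a realistic intermediate goal is an explicit $c<\tfrac83$---is a reasonable assessment and consistent with the paper's stance.
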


\begin{center}
{\bf Acknowledgement}
\end{center}
The research is also partially supported by Taiwanese-Hungarian Mobility Program of the Hungarian Academy of Sciences and by Ministry of Science and Technology Project-based Personnel
Exchange Program.


\begin{thebibliography}{99}

 


\bibitem{AGLM}
\textsc{M. Axenovich, A. Gy\'{a}rf\'{a}s, H. Liu, D. Mubayi}, Multicolor Ramsey numbers for triple systems. Discrete Mathematics, \textbf{322} (2014) 69--77.

\bibitem{AW}
\textsc{M. Axenovich, S. Walzer}, Boolean lattices: Ramsey properties and embeddings. Order, \textbf{34}(2) (2017) 287--298.


\bibitem{BEK}
\textsc{N. G. de Bruijn, C. van T. Ebbenhorst, D. Kruyswijk}, On the set of divisors of a number. Nieuw Arch. Wiskunde (2), \textbf{23} (1951) 191--193.

 

\bibitem{C7}
\textsc{F.-H. Chang, D. Gerbner, W.-T. Li, A. Methuku,
D. Nagy, B. Patk\'{o}s, M. Vizer}, Rainbow Ramsey problems for the Boolean lattice. ArXiv:1809.08629.

\bibitem{CFS1}
\textsc{D. Conlon, J. Fox, B. Sudakov}, Hypergraph Ramsey numbers. Journal of the American Mathematical Society, \textbf{23} (1) (2010) 247--266.

\bibitem{CFS2}
\textsc{D. Conlon, J. Fox, B. Sudakov}, Recent developments in graph Ramsey theory. Surveys in Combinatorics 2015, 49--118.

\bibitem{CS}
\textsc{C. Cox, D. Stolee},  Ramsey numbers for partially-ordered sets. Order, \textbf{35}(3) (2017) 557--579.



\bibitem{DIL}
\textsc{R. P. Dilworth},  A Decomposition Theorem for Partially Ordered Sets. Annals of Mathematics, \textbf{51}(1) (1950) 161--166.


\bibitem{FMO}
\textsc{S. Fujita, C. Magnant, K. Ozeki}, Rainbow generalizations of Ramsey theory-a dynamic survey. Theory Appl. Graphs, (2014).

 

\bibitem{GK}
\textsc{C. Greene, D. J. Kleitman}, Strong versions of Sperner’s theorem. Journal of Combinatorial Theory, Series A, \textbf{20} (1976), 80--88.


\bibitem{GL}
\textsc{J.R. Griggs and W.-T. Li}, Progress on poset-free families of subsets, in: Recent Trends in Combinatorics, (2016), 317--338.


\bibitem{GLL}
\textsc{J. R. Griggs, W.-T. Li, and L. Lu}, Diamond-free families. Journal of Combinatorial Theory, Series A, \textbf{119} (2012), 310--322.

\bibitem{GMT}
\textsc{D. Gr\'{o}sz, A. Methuku, C. Tompkins}, An upper bound on the size of diamond-free families of sets. Journal of Combinatorial Theory, Series A, \textbf{156} (2018), 164--194.

\bibitem{JLM}
\textsc{T. Johnston, L. Lu, K. G. Milans}, Boolean algebras and Lubell functions, Journal of Combinatorial Theory, Series A, \textbf{136} (2015) 174–-183.


\bibitem{KMY}
\textsc{ L. Kramer, R. R. Martin, and M. Young}, On diamond-free subposets of the Boolean lattice. Journal of Combinatorial Theory, Series A, {\bf 120}
(2013), 545--560.

\bibitem{KT}
\textsc{H. A. Kierstead, W. T. Trotter}, A Ramsey theoretic problem for finite ordered sets. Discrete Mathematics, \textbf{63}(2-3) (1987) 217--223.



\bibitem{LM}
\textsc{L. Lu, K. G. Milans}, Set families with forbidden subposets. Journal of Combinatorial Theory, Series A, \textbf{136} (2015) 126--142.


\bibitem{M}
\textsc{A. M\'eroueh}, A LYM inequality for induced posets. Journal of Combinatorial Theory, Series A, \textbf{155} (2018) 398--417.
 
\bibitem{T75}
\textsc{W. T. Trotter}, Embedding finite posets in cubes. Discrete Mathematics \textbf{12} (2) (1975) 165--172.


\bibitem{T99}
\textsc{W. T. Trotter}, Ramsey theory and partially ordered sets. Contemporary Trends in Discrete Mathmatics, RL Graham, et al., eds., DIMACS Series in Discrete Mathematics and Theoretical Computer Science, 49,  (1999) 337--347.
\end{thebibliography}
\end{document}